\documentclass[12pt]{amsart}

\tolerance=500 \setlength{\emergencystretch}{3em}

\usepackage{ifthen}
\usepackage{amsfonts}
\usepackage{amssymb}
\usepackage{array}
\usepackage{eucal}
\usepackage[margin=1.2in]{geometry}
\usepackage{xcolor}
\definecolor{cite}{rgb}{0.50,0.00,1.00}
\definecolor{url}{rgb}{0.00,0.50,0.75}
\definecolor{link}{rgb}{0.00,0.00,0.50}
\usepackage[colorlinks,linkcolor=link,urlcolor=url,citecolor=cite,breaklinks]{hyperref}
\usepackage{indentfirst}
\usepackage{lscape}
\usepackage{mathtools}
\usepackage{mathrsfs}
\usepackage{bbm}
\usepackage[all]{xypic}
\usepackage[lite,abbrev,msc-links]{amsrefs}
\usepackage{amscd}


\theoremstyle{plain}
\newtheorem{proposition}{Proposition}[section]

\newtheorem{lem}[proposition]{Lemma}
\newtheorem{theorem}[proposition]{Theorem}

\theoremstyle{remark}



\renewcommand{\b}[1]{\mathbf{#1}}
\renewcommand{\c}[1]{\mathcal{#1}}
\renewcommand{\d}[1]{\mathbb{#1}}

\renewcommand{\r}[1]{\mathrm{#1}}

\renewcommand{\(}{\left(}
\renewcommand{\)}{\right)}

\newcommand{\cf}{\emph{cf.}~}

\newcommand{\res}{\mathbin{|}}
\newcommand{\wtimes}{\widehat{\otimes}}

\newcommand{\rd}{\,\r{d}}

\newcommand{\oC}{\operatorname{C}}
\newcommand{\oI}{\operatorname{I}}
\newcommand{\oZ}{\operatorname{Z}}
\newcommand{\oA}{\operatorname{A}}
\newcommand{\oP}{\operatorname{P}}
\newcommand{\oM}{\operatorname{M}}
\newcommand{\oN}{\operatorname{N}}

\newcommand{\CC}{\d{C}}
\newcommand{\KK}{\d{K}}
\newcommand{\K}{\mathbbm{k}}
\newcommand{\RR}{\d{R}}

\newcommand{\Ind}{\r{Ind}}

\DeclareMathOperator{\GL}{GL}
\DeclareMathOperator{\Hom}{Hom} 
 \DeclareMathOperator{\RE}{Re}
\DeclareMathOperator{\Sp}{Sp}

\DeclareMathOperator{\oH}{H}
\DeclareMathOperator{\oU}{U}
\DeclareMathOperator{\oJ}{J}

\newcommand{\la}{\langle}
\newcommand{\ra}{\rangle}

\theoremstyle{plain}
\newtheorem{introconjecture}{Conjecture}

\newtheorem{introtheorem}[introconjecture]{Theorem}

\begin{document}

\title{Uniqueness of Fourier--Jacobi models: the Archimedean case}

\author{Yifeng Liu}
\address{Department of Mathematics, Massachusetts Institute of Technology, Cambridge, MA 02139}
\email{liuyf@math.mit.edu}

\author{Binyong Sun}
\address{HUA Loo-Keng Key Laboratory of Mathematics, Academy of Mathematics and Systems Science,
Chinese Academy of Sciences,
Beijing, 100190, P.R. China} \email{sun@math.ac.cn}

\date{September 12, 2012}
\subjclass[2010]{22E30, 22E46 (Primary)}

\keywords{Classical group, irreducible representation,
Fourier--Jacobi model}

\begin{abstract}
We prove uniqueness of Fourier--Jacobi models for general linear
groups, unitary groups, symplectic groups and metaplectic groups,
over an archimedean local field.
\end{abstract}

\maketitle

\section{Introduction and the main result}
\label{sect:intro}

Uniqueness of Bessel models and Fourier--Jacobi models is the basic starting
point to study $L$-functions for classical groups by Rankin--Selberg method
(\cites{GPSR,GJRS}). Breakthroughs have been made towards the proof of the
uniqueness in the recent years. Over a non-archimedean local field of
characteristic zero, uniqueness of Bessel models and Fourier--Jacobi models
is now completely proved, by the works of
Aizenbud--Gourevitch--Rallis--Schiffmann \cite{AGRS}, Sun \cite{Sun09},
Waldspurger \cite{Wald} and Gan--Gross--Prasad \cite{GGP}. Over an
archimedean local field, uniqueness of Bessel models is proved by
Jiang--Sun--Zhu \cite{JSZ}. Only uniqueness of Fourier--Jacobi models in the
archimedean case remains open. This article is aimed to prove this remaining
case.

\begin{introtheorem}\label{main}
Let $G$ be a classical group $\GL_{n}(\RR),\,\GL_{n}(\CC),\,
\oU(p,q),\,\Sp_{2m}(\CC)$, or a metaplectic group $\widetilde \Sp_{2m}(\RR)$,
with an $r$-th Fourier--Jacobi subgroup $S_r$ of it ($r\geq 1, \,n\geq 2r,
\,p,q\geq r, \,m\geq r)$. Denote by $J_r$ and $N_r$  the Jacobi quotient and
the Whittaker quotient of $S_r$, respectively. Then for every irreducible
Casselman--Wallach representation $\pi$ of $G$, every nondegenerate
irreducible Casselman--Wallach representation $\sigma$ of $J_r$, and every
nondegenerate unitary character $\psi$ of $N_r$, one has that
\[
  \dim\Hom_{S_r}\(\pi\widehat \otimes \sigma, \psi\)\leq 1.
\]
\end{introtheorem}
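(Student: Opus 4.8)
The plan is to follow the now-standard Gelfand–Kazhdan / Aizenbud–Gourevitch–Sun–Zhu approach, reducing the multiplicity-one statement to the vanishing of a certain space of distributions invariant under an involution, and then to an orbit-by-orbit local analysis on a symmetric space. First I would set up the basic reduction: by the Casselman–Wallach / Schwartz-distribution formalism, a bound on $\dim\Hom_{S_r}(\pi\wtimes\sigma,\psi)$ follows (via a bilinear-form argument exactly as in \cite{AGRS} and \cite{JSZ}) from showing that every $(S_r,\psi)$-equivariant (generalized) function on an appropriate homogeneous space — morally $G\times J_r$, with the two factors acting on $\pi\wtimes\sigma$ and $\psi$ twisted suitably — that is, every element of a space of the form $\big(\s{S}^*(X)\otimes \text{(coefficients)}\big)^{S_r,\psi}$, is fixed by the natural anti-involution $\tau$ coming from transpose-inverse on $G$ together with the corresponding involution on the Jacobi group. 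Thus the whole problem is recast as a \emph{Gelfand pair}-type statement: the triple $(G\times J_r,\, \Delta S_r,\, \tau)$ enjoys multiplicity one.

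Second, I would import the Bessel-model machinery of Jiang–Sun–Zhu \cite{JSZ} rather than redo it. The key point is that the Fourier–Jacobi case differs from the Bessel case only by the presence of a Heisenberg group and an oscillator (Weil) representation sitting inside $J_r$: the Jacobi quotient $J_r$ is an extension of a smaller classical group (or metaplectic group) $G'$ by a Heisenberg group $\oH$, and $\sigma$ is, by the standard theory, of the form $\sigma'\wtimes\omega_\psi$ where $\omega_\psi$ is the smooth oscillator representation and $\sigma'$ is a Casselman–Wallach representation of $G'$. Using the ``see-saw'' identity and the fact that the oscillator representation has a one-dimensional space of Whittaker functionals, I would peel off the Heisenberg part: the relevant $\Hom$ space is identified with a $\Hom$ space for the pair $(G,G')$ attached to a Bessel-type datum of one lower rank, \emph{exactly} the kind of space to which the main theorem of \cite{JSZ} (uniqueness of Bessel models, archimedean) applies. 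This is the conceptual heart of the argument: converting a Fourier–Jacobi multiplicity problem into a Bessel multiplicity problem by integrating out the Heisenberg group, using that $\omega_\psi$ is ``as small as possible'' with respect to the Heisenberg action.

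Third, for the cases not directly covered — most notably $\Sp_{2m}(\CC)$ and the metaplectic group $\widetilde\Sp_{2m}(\RR)$, where one genuinely needs the oscillator representation and there is no Bessel analogue to quote verbatim — I would carry out the geometric invariant-distribution argument directly. Concretely: decompose the symmetric space $X$ under $S_r$ into finitely many orbits; on each orbit, a spreading-out / Frobenius-reciprocity argument reduces the $\tau$-invariance to a question about the stabilizer, which is itself (essentially) a product of a smaller Fourier–Jacobi group and a reductive part; and one checks the needed ``null''/nilpotent-cone condition so that Aizenbud–Gourevitch's theory of $\tau$-invariant distributions supported on the singular set applies, ruling out bad orbit contributions. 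The induction on $r$ (base case $r=1$, or reduction to $r=0$ which is the Bessel base case) closes the loop.

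**Main obstacle.** The hard part will be the passage through the Heisenberg/oscillator layer in a way that is uniform across all five families — in particular, verifying that the relevant stabilizers and the associated nilpotent-orbit estimates behave correctly for the complex symplectic group and the real metaplectic group, where the oscillator representation is infinite-dimensional and genuinely ``quantum,'' so that one cannot simply transcribe the Bessel computation but must redo the orbit analysis and the transposition argument with the Weil representation's equivariance built in. I expect that step — reconciling the Heisenberg-reduction with the Aizenbud–Gourevitch invariant-distribution criterion on the singular support — to be where essentially all the real work lies.
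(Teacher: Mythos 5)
Your proposal and the paper diverge at the very first structural step, and the divergence is significant enough to be a genuine gap.

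\textbf{What the paper actually does.} The paper does \emph{not} redo the Gelfand--Kazhdan/AGRS distribution-theoretic argument for the full $r$-th Fourier--Jacobi subgroup $S_r$, and it does \emph{not} pass from Fourier--Jacobi to Bessel. Instead, it reduces Theorem~\ref{main} to the already-established \emph{multiplicity one theorem for Fourier--Jacobi models} (Theorem~\ref{main2}): for a Jacobi group $J=\oH\rtimes G$ and its classical subgroup $G$, one has $\dim\Hom_G(\rho\wtimes\pi,\CC)\leq 1$. This is a theorem of Sun--Zhu \cite{SZ}, not \cite{JSZ}; the Bessel paper \cite{JSZ} contributes the \emph{reduction technique} but not the base case. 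The reduction itself is a Rankin--Selberg/Jacquet--Shalika style integral argument: pick a generic Casselman--Wallach representation $\tau$ of $\GL'(X)$ with $\Hom_{\oN_{\mathcal F}(X)}(\tau,\psi^{-1})\neq 0$, form the Schwartz-induced family $\oI_s=\Ind_{\oH(X^\perp)\rtimes\oP'_X}^{\mathbb{J}'(E)}(\sigma\wtimes\tau_s)$, show that generically $\oI_s\cong\omega\wtimes\Ind_{\oP'_X}^{\oU'(E)}\varrho_s$ is a nondegenerate irreducible Casselman--Wallach representation of $\mathbb{J}'(E)$ (Proposition~\ref{irr}, via Speh--Vogan), and attach to each $\mu\in\Hom_{S_r}(\pi\wtimes\sigma,\psi)$ the zeta-integral functional $\oZ_\mu\in\Hom_{\oU'(E)}(\oI_s\wtimes\pi,\CC)$. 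Proposition~\ref{prop:nontrivial} makes $\mu\mapsto\oZ_\mu$ injective and Proposition~\ref{prop:converge} makes it well-defined for $\RE s$ large; Theorem~\ref{main2} then bounds the target space by one. There is no induction on $r$ --- it is a single-shot reduction --- and no orbit-by-orbit analysis, no involution, no nilpotent-cone condition anywhere in this paper; all of that heavy lifting sits inside \cite{SZ} and is used as a black box.

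\textbf{Where your proposal breaks down.} Your central move --- ``peeling off the Heisenberg group'' to convert the Fourier--Jacobi multiplicity problem into a Bessel multiplicity problem to which \cite{JSZ} applies --- does not work as described. The see-saw/theta duality relating Fourier--Jacobi periods to Bessel periods is a genuine phenomenon (it underlies the GGP conjectures), but it relates multiplicity spaces for a $\emph{pair}$ of groups on the two sides of a dual pair, not for the same group; it does not give a direct identification of $\Hom_{S_r}(\pi\wtimes\sigma,\psi)$ with a Bessel $\Hom$-space for the same $\pi$, and in the metaplectic and complex symplectic cases there is no Bessel counterpart of the ambient group at all. What \emph{is} true and what the paper uses is that $\sigma$, being nondegenerate, factors as $\omega_\psi\wtimes\sigma'$ with $\sigma'$ a Casselman--Wallach representation of the smaller reductive group --- but this decomposition is exploited \emph{inside} the reduction (to compute $\oI_s$ and relate it to a parabolic induction on $\oU'(E)$), not as a bridge to Bessel models. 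Your Step~3, where you propose to carry out the invariant-distribution/orbit argument from scratch for $\Sp_{2m}(\CC)$ and $\widetilde\Sp_{2m}(\RR)$, is essentially a proposal to reprove Sun--Zhu's theorem; that is a much larger undertaking than the paper attempts, and it is not needed since \cite{SZ} already covers those cases (the metaplectic one by a simple observation reducing it to $\Sp_{2n}(\RR)$). In short: you have correctly identified that the Heisenberg/oscillator layer and the Jiang--Sun--Zhu integral technique are the two ingredients, but you have wired them together in the wrong way, and you are missing the actual base case (\cite{SZ}, not \cite{JSZ}) and the actual mechanism (a zeta integral yielding an injection of $\Hom$-spaces, with generic irreducibility of the induced family).
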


Here $\sigma$ and $\psi$ are viewed as representations of $S_r$ via
inflations, and ``$\widehat \otimes$" stands for the completed projective
tensor product. By abuse of notation, we do not distinguish representations
with their underlying vector spaces. Fourier--Jacobi subgroups as well as
their Jacobi quotients and Whittaker quotients are defined in Section
\ref{sect:FJ}. The notion of ``nondegenerate unitary character on $N_r$" is
also explained in Section \ref{sect:FJ}. The notions concerning
Casselman--Wallach representations are explained in Section \ref{cw}. Note
that Theorem \ref{main} for $\widetilde \Sp_{2m}(\RR)$ implies the analogous
result for the symplectic group $\Sp_{2m}(\RR)$.

When $n=2r$, or $p=q=r$, or $m=r$, Theorem \ref{main} asserts uniqueness of
Whittaker models for $G$. See \cite{Shal}, \cite{CHM} for uniqueness of
Whittaker models for quasi-split linear groups over $\RR$ (or \cite{JSZ} for
a quick proof). When $G=\oU(n,1)$ and $\pi$ is unitary, Theorem \ref{main} is
proved in \cite{BR}.

As in the proof of uniqueness of Bessel models, our idea is to reduce Theorem
\ref{main} to the following basic case, which is called the multiplicity one
theorem for Fourier--Jacobi models.

\begin{introtheorem}\label{main2}
Let $J$ be one of the following Jacobi groups
\begin{equation}\label{Jacobi}
   \oH_{2n+1}(\RR)\rtimes \GL_{n}(\RR),\,\, \oH_{2n+1}(\CC)\rtimes\GL_{n}(\CC),\,\,\oH_{2p+2q+1}(\RR)\rtimes\oU(p,q),
\end{equation}
\[\oH_{2n+1}(\CC)\rtimes \Sp_{2n}(\CC), \,\, \oH_{2n+1}(\RR)\rtimes\widetilde\Sp_{2n}(\RR) ,\quad p,q,n\geq 0,
\] where ``$\oH_{2k+1}$" indicates the appropriate Heisenberg group of dimension
$2k+1$. Denote by $G$ its respective subgroup
\[\GL_{n}(\RR),\,\, \GL_{n}(\CC),\,\,\oU(p,q),\,\,\Sp_{2n}(\CC),\,\,\widetilde\Sp_{2n}(\RR).\]
Then for every nondegenerate irreducible Casselman--Wallach representation
$\rho$ of $J$, and every irreducible Casselman--Wallach representation of
$\pi$ of $G$, one has that
\[\dim\Hom_{G}\(\rho\widehat \otimes\pi,\CC\)\leq 1.\]
\end{introtheorem}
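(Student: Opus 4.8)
The plan is to reduce Theorem \ref{main2}, via the smooth Stone--von Neumann theorem, to a multiplicity bound for a triple tensor product containing the oscillator representation, and then to settle that bound by the archimedean Gelfand--Kazhdan method together with Harish--Chandra descent.

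\emph{Absorbing the Heisenberg group.} Let $\oH$ be the Heisenberg group occurring in $J$, with center $Z$ and underlying symplectic space $W$ (regarded as a real symplectic space, realified if it is complex), so that $J=\oH\rtimes G$ with $G\hookrightarrow\Sp(W)$. Let $\psi_0$ be the central character of $\rho\res Z$; nondegeneracy of $\rho$ means exactly that $\psi_0$ is nontrivial. Let $\widetilde G$ denote the preimage of $G$ in the metaplectic double cover $\widetilde\Sp(W)$; this cover splits over $G$ when $G$ is $\GL_{n}(\RR),\GL_{n}(\CC),\oU(p,q)$ or $\Sp_{2n}(\CC)$, and $\widetilde G=G$ when $G=\widetilde\Sp_{2n}(\RR)$. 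By the smooth version of the Stone--von Neumann theorem for Casselman--Wallach representations, $\rho\cong\omega_{\psi_0}\widehat\otimes\tau$ as representations of $J$, where $\omega_{\psi_0}$ is the smooth oscillator representation of $\oH\rtimes\widetilde G$ and $\tau$ is a genuine irreducible Casselman--Wallach representation of $\widetilde G$. Restricting to $G$ and tensoring with $\pi$ gives
\[
 \Hom_{G}(\rho\widehat\otimes\pi,\CC)=\Hom_{\widetilde G}(\omega_{\psi_0}\widehat\otimes\tau\widehat\otimes\pi,\CC),
\]
with $\widetilde G$ acting diagonally, so it suffices to bound the right-hand side by $1$ for every genuine irreducible $\tau$ and every irreducible $\pi$.

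\emph{The Gelfand--Kazhdan criterion.} Put $\Pi:=\omega_{\psi_0}\widehat\otimes\tau\widehat\otimes\pi$. Two standard facts about the oscillator representation enter: $\omega_{\psi_0}^\vee\cong\omega_{\overline{\psi_0}}$, and the diagonal restriction to $\widetilde G$ of $\omega_{\psi_0}\widehat\otimes\omega_{\overline{\psi_0}}$ is isomorphic to the Schwartz space $\sS(W)$ with $\Sp(W)$ acting by its linear action on $W$. Combining these with an MVW-type involution on the metaplectic group and with the self-duality up to an anti-automorphism of irreducible representations of $\GL$, $\oU$ and $\Sp$, one produces an anti-automorphism $\theta$ of $\widetilde G$ with $\theta^{*}\Pi\cong\Pi^\vee$. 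By the archimedean Gelfand--Kazhdan method (as developed by Aizenbud--Gourevitch and Sun--Zhu), $\dim\Hom_{\widetilde G}(\Pi,\CC)\le1$ will follow once one shows: every $\r{Ad}(\widetilde G)$-equivariant generalized function on $\widetilde G$ valued in the $\widetilde G$-module $\Pi\widehat\otimes\Pi^\vee$ is invariant under $g\mapsto\theta(g)$. The reason for passing to $\Pi\widehat\otimes\Pi^\vee$ is the de-looping above: the oscillator part $\omega_{\psi_0}\widehat\otimes\omega_{\overline{\psi_0}}$ becomes the geometric module $\sS(W)$, so the coefficient module is $\sS(W)\widehat\otimes(\tau\widehat\otimes\tau^\vee)\widehat\otimes(\pi\widehat\otimes\pi^\vee)$, and it is this that makes the descent feasible.

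\emph{Harish--Chandra descent and the main obstacle.} Transport the problem from $\widetilde G$ to $\f{g}=\r{Lie}(G)$ by the exponential map and a partition of unity: it becomes the assertion that every $\r{Ad}(G)$-equivariant, $\sS(W)\widehat\otimes(\tau\widehat\otimes\tau^\vee)\widehat\otimes(\pi\widehat\otimes\pi^\vee)$-valued distribution on $\f{g}$ is fixed by $\theta$. By the archimedean generalized Harish--Chandra descent of Aizenbud--Gourevitch one reduces to distributions supported on the nilpotent cone of $\f{g}$; a further descent in the linear $G$-space $W$ (handled by the same machinery, using the partial Fourier transform on $\sS(W)$) and the stratification of the nilpotent cone by $\r{Ad}(G)$-orbits reduce the statement, orbit by orbit, to the vanishing of an $\r{Ad}(G_x)$-equivariant distribution on a slice, with controlled wave front set and prescribed homogeneity, that is anti-invariant under $\theta$. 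These local vanishing statements should follow from the archimedean null-cone estimates and holonomicity arguments of Aizenbud--Gourevitch and Sun--Zhu, which apply because $G$ is of type $\GL$, $\oU$, $\Sp$ or metaplectic. I expect this last step to be the main obstacle: the archimedean regularity theory for equivariant distributions must be carried out with infinite-dimensional coefficient modules ($\sS(W)$ from the oscillator representation, together with $\tau\widehat\otimes\tau^\vee$ and $\pi\widehat\otimes\pi^\vee$ from $\tau$ and $\pi$), so one has to check that this Heisenberg--Weil contribution is compatible with descent and does not destroy the nilpotent estimates. The smooth Stone--von Neumann theorem and the construction of the anti-automorphism $\theta$ on the metaplectic group are the other ingredients that go beyond the Bessel case treated in \cite{JSZ}.
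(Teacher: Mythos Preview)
The paper does not give an independent proof of this theorem: it simply cites \cite{SZ} for all cases except $G=\widetilde\Sp_{2n}(\RR)$, and for that case observes (using the classification in Section~\ref{cw}) that a nondegenerate irreducible Casselman--Wallach representation of $J$ has the form $\omega\widehat\otimes\pi_0$ with $\pi_0$ a genuine irreducible representation of $\widetilde\Sp_{2n}(\RR)$, which immediately reduces the statement to the $\Sp_{2n}(\RR)$ case already handled in \cite{SZ}. Your first step (``absorbing the Heisenberg group'') is exactly this reduction, carried out uniformly, and it is correct.

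What remains in your proposal is an outline of how one might prove the multiplicity-one theorem that \cite{SZ} supplies. Your sketch is in the right spirit---Gelfand--Kazhdan plus archimedean Harish--Chandra descent is indeed the method of \cite{SZ}---but the formulation you give is not quite theirs and is incomplete. In \cite{SZ} the key invariant-distribution statement concerns scalar-valued distributions on the finite-dimensional $G$-space $G\times W$ (with $G$ acting by $\r{Ad}$ on the first factor and linearly on the second), not $\Pi\widehat\otimes\Pi^\vee$-valued distributions on $\widetilde G$; your ``de-looping'' of $\omega_{\psi_0}\widehat\otimes\omega_{\overline{\psi_0}}$ into $\sS(W)$ morally explains why $W$ enters, but the passage from a vector-valued problem on $\widetilde G$ to a scalar problem amenable to descent is not automatic, and it is not how \cite{SZ} proceeds. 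You yourself flag the main obstacle---running the archimedean regularity and null-cone estimates with infinite-dimensional coefficient modules---and that is a genuine gap: the available machinery (Aizenbud--Gourevitch, \cite{SZ}) is built for scalar distributions on a finite-dimensional $G$-space, and your proposal does not indicate how to bridge this. In short, your reduction step matches the paper, but the remainder is a heuristic for a theorem the paper simply quotes rather than proves.
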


In the above inequality, $\CC$ stands for the trivial representation of $G$.
Theorem \ref{main2} is proved in \cite{SZ}, except for the case of
$G=\widetilde\Sp_{2n}(\RR)$. But in this case, by the classification of
nondegenerate irreducible Casselman--Wallach representations of Jacobi groups
(see Section  \ref{cw}),  Theorem \ref{main2} is obviously equivalent to the
analogous result for $\Sp_{2n}(\RR)$, which is also proved in \cite{SZ}.

\section{Fourier--Jacobi subgroups}\label{sect:FJ}

In order to prove Theorem \ref{main} uniformly, we introduce the following notation. Let
$(\KK,\iota)$ be one of the followings five $\RR$-algebras with
involutions:
\begin{equation}\label{algk}
  (\RR\times\RR,\iota_{\RR}),\,(\CC\times\CC,\iota_{\CC}),\,(\CC,\overline{\phantom{x}}),\,(\RR,1_{\RR}),\,(\CC,1_{\CC}),
\end{equation}
where $\iota_{\RR}$ and $\iota_{\CC}$ are the maps interchanging the
coordinates, $1_{\RR}$ and $1_{\CC}$ are the identity maps, and
``$\overline{\phantom{x}}$'' is the complex conjugation. Let $E$ be a
skew-Hermitian $\KK$-module; namely, it is a free $\KK$-module of finite
rank, equipped with a nondegenerate $\RR$-bilinear map
\[\la\,,\,\ra_E\colon E\times E\rightarrow \KK\] satisfying
\[ \la u,v\ra_E=-\la v,u\ra_E^\iota, \quad \la au,v\ra_E=a\la u,v\ra_E,\quad a\in \KK,\, u,v\in E.\]

Denote by $\oU(E)$ the group of all $\KK$-module automorphisms of $E$ which
preserve the form $\la\,,\,\ra_E$. According to the five cases of
$(\KK,\iota)$ in (\ref{algk}), it is respectively a real general linear
group, a complex general linear group, a real unitary group, a real
symplectic group, or a complex symplectic group. Put
\[\oU'(E):=\begin{cases}
                \widetilde{\operatorname{Sp}}(E) & \text{if $\KK=\RR$;} \\
                \oU(E) & \text{otherwise,}
              \end{cases}
\] where $\widetilde{\operatorname{Sp}}(E)$ denotes the metaplectic
double cover of the symplectic group $\operatorname{Sp}(E)$. Then we have a
short exact sequence
\begin{equation}\label{witt}
   1\rightarrow \mu_\KK\rightarrow
    \oU'(E)\rightarrow
    \oU(E)\rightarrow 1,
\end{equation}
where
 \[ \mu_\KK:=\begin{cases}
                \{\pm 1\} & \text{if $\KK=\RR$;} \\
                \{1\} & \text{otherwise.}
              \end{cases}
 \]

Let $r\geq 1$ and assume that there is a sequence
\[\mathcal F\colon\quad 0=X_0\subset X_1\subset \cdots\subset X_r=X\] of
totally isotropic free $\KK$-submodules of $E$ so that
$\operatorname{rank}_\KK(X_i)=i$, $i=0,1,\dots, r$. Put
\[\oJ_{\mathcal F}(E):=\{g\in \oU(E)\mid (g-1)X_i\subset X_{i-1},\,i=1,2\dots,r\}.\]
Denote by $\oJ'_{\mathcal F}(E)$ the inverse image of $\oJ_{\mathcal F}(E)$
under the covering map $\oU'(E)\rightarrow \oU(E)$. It is called an $r$-th
Fourier--Jacobi subgroup of $\oU'(E)$.

When $r=1$, we put
\[\oJ_X(E):=\oJ_{\mathcal F}(E)=\{g\in \oU(E)\mid (g-1)X\subset X\}\quad \text{and}\quad \oJ'_X(E):=\oJ'_{\mathcal F}(E).\]
Then $\oJ'_X(E)$ is isomorphic to a Jacobi group of Theorem \ref{main2}, and
conversely, all Jacobi groups of Theorem \ref{main2} are isomorphic to some
$\oJ'_X(E)$ (\cf \cite{Sun09}*{Section 1}).

For every subset $S$ of $E$, set
\[S^{\perp}:=\{v\in E\res \langle v,u\rangle_E=0\, \text{ for all }\,u\in S\}.\]
Then $E':=X_{r-1}^\perp/X_{r-1}$ is obviously a skew-Hermitian $\KK$-module.
Put
\[X':=X_r/X_{r-1}\subset E',\] which is a totally isotropic free $\KK$-submodule
of $E'$ of rank $1$. Restrictions yield a surjective homomorphism
\[j_\mathcal F\colon \oJ_{\mathcal F}(E)\rightarrow \oJ_{X'}(E').\]
There is a unique surjective homomorphism
\[j'_\mathcal F\colon \oJ'_{\mathcal F}(E)\rightarrow \oJ'_{X'}(E')\]
so that the squares in
\[\xymatrix{
   1 \ar[r] & \mu_\KK\ar[r] \ar@{=}[d] &\oJ'_{\mathcal F}(E) \ar@{->}[r] \ar[d]^-{j'_\mathcal F}
   &\oJ_{\mathcal F}(E) \ar[r] \ar[d]^{j_\mathcal F}  & 1 \\
   1 \ar[r] & \mu_\KK\ar@{->}[r] &\oJ'_{X'}(E')\ar@{->}[r] &\oJ_{X'}(E') \ar[r] & 1
   }\] are commutative. In view of the homomorphism $j'_\mathcal F$, we call $\oJ'_{X'}(E')$
the \emph{Jacobi quotient} of $\oJ'_{\mathcal F}(E)$.

Put
\[\oN_{\mathcal F}(X):=\{g\in \GL(X)\mid (g-1)X_i\subset X_{i-1}, \,\, i=1,2,\dots, r\}.\]
It is a maximal unipotent subgroup of the group $\GL(X)$ of $\KK$-linear
automorphisms of $X$. Restrictions yield a surjective homomorphism
$w_\mathcal F\colon \oJ_{\mathcal F}(E)\rightarrow \oN_{\mathcal F}(X)$.
Composing it with the covering map $\oJ'_{\mathcal F}(E)\rightarrow
\oJ_{\mathcal F}(E)$, we get a homomorphism
\[w'_\mathcal F\colon \oJ'_{\mathcal F}(E)\rightarrow \oN_{\mathcal F}(X).\]
In view of this homomorphism, we call $\oN_{\mathcal F}(X)$ the
\emph{Whittaker quotient} of the Fourier--Jacobi subgroup $\oJ'_{\mathcal
F}(E)$.

We review the notion of nondegenerate characters on $\oN_{\mathcal F}(X)$.
Define a surjective homomorphism
\begin{equation}\label{ff}
 \oN_{\mathcal F}(X) \rightarrow \oA_\mathcal F(X):=\prod_{i=1}^{r-1}\Hom_{\KK}(X_{i+1}/X_i,X_i/X_{i-1}),\quad g\mapsto (a_1,a_2,\dots, a_{r-1}),
\end{equation}
where $a_i$ is the map $v+X_i\mapsto (g-1)v+X_{i-1}$. Then every unitary
character $\psi_{\oN_{\mathcal F}(X)}$ on $\oN_{\mathcal F}(X)$ descends to a
character $\psi_{\oA_{\mathcal F}(X)}$ on $\oA_\mathcal F(X)$ though
\eqref{ff}. Note that $\oA_\mathcal F(X)$ is a free $\KK^{r-1}$-module of
rank $1$. We say that $\psi_{\oN_{\mathcal F}(X)}$ is nondegenerate if the
restriction of $\psi_{\oA_{\mathcal F}(X)}$ to  every nonzero
$\KK^{r-1}$-submodule of $\oA_\mathcal F(X)$ is nontrivial.

\section{Preliminaries}
\label{sect:jacobi}

\subsection{Almost linear Nash groups}

We work in the setting of Nash groups. The reader is referred to \cites{Shi1,
Shi2} for details. By a \emph{Nash group}, we mean a group which is
simultaneously a Nash manifold so that all group operations (the
multiplication and the inversion) are Nash maps. Every semialgebraic subgroup
of a Nash group is automatically closed and is called a \emph{Nash subgroup}.
It is canonically a Nash group.

A finite dimensional real representation $V_\RR$ of a Nash group $G$ is said
to be a \emph{Nash representation} if the action map $G\times
V_\RR\rightarrow V_\RR$ is Nash. A Nash group is said to be \emph{almost
linear} if it admits a Nash representation with finite kernel. For every
linear algebraic group $\textsf G$ defined over $\RR$, every finite fold
topological group cover of an open subgroup of $\textsf G(\RR)$ is naturally
an almost linear Nash group. On the other hand, every almost linear Nash
group is of this form. In particular, all groups which occur in last section
are almost linear Nash groups.

A Nash group is said to be \emph{unipotent} if it admits a faithful Nash
representation so that all group elements act as unipotent linear operators.
It follows from the corresponding result for linear algebraic groups that
every almost linear Nash group has a unipotent radical, namely, the largest
unipotent normal Nash subgroup. A \emph{reductive Nash group} is defined to
be an almost linear Nash group with trivial unipotent radical.

Recall that a Nash manifold is said to be \emph{affine} if it is Nash
diffeomorphic to a closed Nash submanifold of some $\RR^n$. Since every
finite fold topological cover of an affine Nash manifold is an affine Nash
manifold, all almost linear Nash groups are affine as Nash manifolds.

\subsection{Schwartz inductions}

If $M$ is an affine Nash manifold and $V_0$ is a (complex) Fr\'{e}chet space,
then a $V_0$-valued smooth function $f\in \oC^{\infty}(M; V_0)$ is said to be
Schwartz if
\[|f\res_{\r{D},\nu}:=\sup_{x\in M} |(\r{D}f)(x)|_\nu<\infty\]
for all Nash differential operator $\r{D}$ on $M$, and all continuous
seminorm $|\cdot|_\nu$ on $V_0$. Recall that a differential operator $\r{D}$
on $M$ is said to be Nash if $\r{D}\varphi$ is a Nash function whenever
$\varphi$ is a (complex valued) Nash function on $M$. Denote by
$\oC^\varsigma(M; V_0)\subset \oC^{\infty}(M; V_0)$ the space of Schwartz
functions. Then both $\oC^\varsigma(M; V_0)$ and $\oC^{\infty}(M; V_0)$ are
naturally Fr\'{e}chet spaces, and the inclusion map $\oC^\varsigma(M;
V_0)\hookrightarrow \oC^{\infty}(M; V_0)$ is continuous. Furthermore, we have
that (\cf \cite{Tr}*{page 533})
\[\oC^\varsigma(M; V_0)=\oC^\varsigma(M)\widehat \otimes  V_0\qquad \text{and}\qquad  \oC^\infty(M; V_0)=\oC^\infty(M)\widehat \otimes  V_0,\]
where $\oC^\varsigma(M):=\oC^\varsigma(M;\CC)$ and  $\oC^\infty(M):=\oC^\infty(M;\CC)$.

Now we recall Schwartz inductions from \cite{duC}*{Section 2}. Let $G$ be an
almost linear Nash group. Then it is affine as a Nash manifold. Let $S$ be a
Nash subgroup of $G$, and let $V_0$ be a smooth Fr\'{e}chet representation of
$S$ of moderate growth (\cf \cite{duC}*{Definition 1.4.1} or
\cite{Sun12}*{Section 2}). Define a continuous linear map
\begin{equation}\label{ind}
  \begin{array}{rcl}
  I_{S, V_0}\colon \oC^\varsigma(G; V_0)&\rightarrow &\oC^{\infty}(G; V_0),\\
   f&\mapsto &\left(\, g\mapsto \int_S s.f(s^{-1}g)\rd s \right),
  \end{array}
\end{equation}
where $\r{d}s$ is a left invariant Haar measure on $S$. We define the
unnormalized Schwartz induction $\Ind_S^G V_0$ to be the image of the map
\eqref{ind}. Under the quotient topology of $\oC^\varsigma(G; V_0)$ and under
right translations, it is a smooth Fr\'{e}chet representation of $G$ of
moderate growth.

A partition of unity argument shows the following
\begin{lem}\label{compact}
Let $f\in \oC^\infty(G;V_0)$. If
 \[f(sg)=s.f(g),\quad s\in S, \,g\in G,\]
and $f$ is compactly supported modulo $S$ (that is, the support of $f$ has
compact image under the map $G\rightarrow S\backslash G$), then $f\in
\Ind_S^G V_0$.
\end{lem}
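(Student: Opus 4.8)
The plan is to prove Lemma \ref{compact} by a partition of unity argument that reduces the claim to a single coordinate chart, where the Schwartz condition becomes transparent. First I would fix $f\in\oC^\infty(G;V_0)$ with $f(sg)=s.f(g)$ and with compact image of $\operatorname{supp} f$ in $S\backslash G$. The map $q\colon G\to S\backslash G$ is a Nash submersion (since $S$ is a Nash subgroup of the almost linear Nash group $G$), so it admits Nash local sections; using affineness and semialgebraic paracompactness one can cover $q(\operatorname{supp} f)$ by finitely many open semialgebraic subsets $U_1,\dots,U_N$ of $S\backslash G$, each admitting a Nash section $\tau_i\colon U_i\to G$, and choose a Nash partition of unity $\{\chi_i\}$ on $S\backslash G$ subordinate to $\{U_i\}$ (adding $U_0$ disjoint from $q(\operatorname{supp} f)$ to complete the cover). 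Then $f=\sum_i (\chi_i\circ q)\,f$, and since $\Ind_S^G V_0$ is a vector space it suffices to treat each summand, i.e. to assume $q(\operatorname{supp} f)\subset U$ for a single $U$ admitting a Nash section $\tau$.

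Next I would produce an explicit Schwartz preimage under the map \eqref{ind}. On $U$ we have the Nash diffeomorphism $S\times U\xrightarrow{\sim} q^{-1}(U)$, $(s,u)\mapsto s.\tau(u)$; write its inverse as $g\mapsto(\sigma(g),\bar g)$ with $\sigma\colon q^{-1}(U)\to S$ and $\bar g=q(g)$ Nash maps. Pick a non-negative Nash function $\phi$ on $S$ that is compactly supported and satisfies $\int_S \phi(s)\rd s=1$ — such exist because $S$ is an affine Nash manifold. Define
\[
 h(g):=\begin{cases}\phi\bigl(\sigma(g)\bigr)\,\bigl(\chi\circ q\bigr)(g)\;\sigma(g)^{-1}.f(g),& g\in q^{-1}(U),\\[2pt] 0,& g\notin q^{-1}(U),\end{cases}
\]
where $\chi$ is a Nash cutoff on $S\backslash G$ equal to $1$ on $q(\operatorname{supp} f)$ and supported in $U$. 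One checks that $h$ is a well-defined smooth $V_0$-valued function on $G$: near $\operatorname{supp}(h)$ the two branches agree because $\phi$ is compactly supported in $S$ while $\sigma(g)$ leaves every compact subset of $S$ as $g$ approaches $\partial q^{-1}(U)$. Moreover $h$ is supported in the set of $g$ with $\sigma(g)$ in a fixed compact subset of $S$ and $q(g)$ in $\operatorname{supp}\chi$, which is a compact subset of $q^{-1}(U)$; since $f$ is bounded with all Nash-derivatives bounded on this compact set (it is smooth there and the set is compact), and $\sigma,\phi,\chi$ are Nash, $h\in\oC^\varsigma(G;V_0)$. Finally, using $f(sg)=s.f(g)$ and the substitution $s\mapsto s\,\sigma(g)$ together with left-invariance of $\rd s$,
\[
 \bigl(I_{S,V_0}h\bigr)(g)=\int_S s.h(s^{-1}g)\rd s=\int_S \phi\bigl(\sigma(s^{-1}g)\bigr)\,(\chi\circ q)(g)\, s\,\sigma(s^{-1}g)^{-1}.\,f(s^{-1}g)\rd s,
\]
and since $\sigma(s^{-1}g)=s^{-1}\sigma(g)$ on $q^{-1}(U)$ this equals $(\chi\circ q)(g)\,\bigl(\int_S\phi(t)\rd t\bigr)\,f(g)=f(g)$ on $q(\operatorname{supp} f)$, and both sides vanish elsewhere. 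Hence $f=I_{S,V_0}h\in\Ind_S^G V_0$.

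The main obstacle I expect is the bookkeeping around the \emph{Schwartz} (rather than merely compactly supported smooth) condition on $h$: one must be careful that multiplying by the Nash functions $\phi\circ\sigma$ and $\chi\circ q$ genuinely forces all seminorms $|\,\cdot\,|_{\r{D},\nu}$ to be finite, which uses that the support of $h$ is contained in a compact subset of the chart $q^{-1}(U)$ and that on a compact set a smooth function has all derivatives bounded — so the Schwartz seminorms reduce to sup-norms over a compact set. A secondary technical point is verifying smoothness of $h$ across $\partial q^{-1}(U)$, which rests on the properness-type fact that the Nash coordinate $\sigma$ escapes to infinity in $S$ as one approaches the boundary of the chart; this is where affineness of the Nash manifolds and the Nash structure of the fibration $q$ are used, and it is cleanest to choose the cover $\{U_i\}$ at the outset so that each $\operatorname{supp}\chi_i$ is relatively compact in $U_i$.
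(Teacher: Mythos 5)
Your overall strategy --- a smooth partition of unity subordinate to a finite cover of $q(\operatorname{supp} f)$ by charts admitting local sections, then an explicit Schwartz preimage on a single chart --- is precisely the kind of partition-of-unity argument the paper alludes to. But your explicit $h$ does not satisfy $I_{S,V_0}h=f$, and the computation claiming it does is wrong. Since $\sigma(g)^{-1}.f(g)=f(\tau(q(g)))=:v(\bar g)$ depends only on $\bar g=q(g)$, your $h$ is $h(g)=\phi(\sigma(g))(\chi\circ q)(g)\,v(\bar g)$, so
\[
  (I_{S,V_0}h)(g)=(\chi\circ q)(g)\int_S\phi\bigl(s^{-1}\sigma(g)\bigr)\,s.v(\bar g)\rd s
  =(\chi\circ q)(g)\,\sigma(g).\Bigl(\int_S\phi(t)\,\Delta_S(t)^{-1}\,t^{-1}.v(\bar g)\rd t\Bigr),
\]
after the substitution $t=s^{-1}\sigma(g)$. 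The inner integral is a smoothing of $v(\bar g)$ under the $S$-action, not $v(\bar g)$ itself, so you recover $(\chi\circ q)f$ only if $\phi$ degenerates to a point mass; the step ``this equals $(\chi\circ q)(g)\bigl(\int_S\phi\bigr)f(g)$'' is false because the $s$ inside the integral still acts nontrivially on the vector. The fix is to drop the spurious $\sigma(g)^{-1}$: with $h(g):=\phi(\sigma(g))(\chi\circ q)(g)\,f(g)$ one has $s.h(s^{-1}g)=\phi(s^{-1}\sigma(g))(\chi\circ q)(g)\,s.(s^{-1}.f(g))=\phi(s^{-1}\sigma(g))(\chi\circ q)(g)\,f(g)$, hence $(I_{S,V_0}h)(g)=C\,(\chi\circ q)(g)f(g)$ with $C=\int_S\phi(s)\Delta_S(s)^{-1}\rd s$ a constant; normalize $\phi$ so $C=1$.

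Two secondary points. The boundary argument (``$\sigma(g)$ leaves every compact subset of $S$ as $g\to\partial q^{-1}(U)$'') is false: in a product chart $S\times U$ the $S$-coordinate has no reason to escape as one approaches $S\times\partial U$. What actually makes $h$ smooth across the boundary is choosing $\chi$ with $\operatorname{supp}\chi$ compact in $U$ --- which you do note at the end, but it should be built into the construction from the start, so that $q^{-1}(U)$ and $G\setminus q^{-1}(\operatorname{supp}\chi)$ form an open cover of $G$ on which the two branches of $h$ agree. Also, the cutoffs $\phi$ and $\chi_i$ cannot be Nash and compactly supported simultaneously (a compactly supported Nash function on a connected positive-dimensional Nash manifold vanishes by real-analyticity); ordinary smooth compactly supported functions are what is needed, and the Schwartz condition on $h$ then holds simply because $\operatorname{supp} h$ is compact, so each seminorm $|\cdot|_{\r{D},\nu}$ is a sup of a continuous function over a compact set.
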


If $S'$ is a Nash subgroup of $G$ containing $S$, then we have a canonical
isomorphism of representations of $G$ (\cite{duC}*{Lemma 2.1.6}):
\begin{equation}\label{istep}
   \Ind_{S'}^G(\Ind_S^{S'} V_0)\cong\Ind_S^G V_0.
\end{equation}

We will use the following result.

\begin{lem}\label{lindt}
Let $V_0$ and $V$ be smooth moderate growth Fr\'{e}chet representations of
$S$ and $G$, respectively. If $V$ is nuclear, then there is an isomorphism of
representations of $G$:
\begin{equation}\label{indt}
    \Ind_S^G (V_0\widehat \otimes (V\res_S))\cong(\Ind_S^G V_0)\widehat \otimes V.
\end{equation}
\end{lem}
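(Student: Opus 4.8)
\textbf{Proof proposal for Lemma \ref{lindt}.}

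The plan is to construct the isomorphism \eqref{indt} explicitly and then check it is a topological isomorphism of $G$-representations. First I would recall from the previous subsection that, since $G$ is affine and $V$ is nuclear, we have $\oC^\varsigma(G;V_0\wtimes(V\res_S))=\oC^\varsigma(G)\wtimes V_0\wtimes(V\res_S)$, and similarly with $\oC^\infty$ in place of $\oC^\varsigma$; the nuclearity of $V$ guarantees that these completed projective tensor products are well-behaved (in particular associative and commutative up to canonical isomorphism) and that $\Ind_S^G V_0$, being a quotient of the nuclear Fréchet space $\oC^\varsigma(G;V_0)$, is itself nuclear so that $(\Ind_S^G V_0)\wtimes V$ makes sense. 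The key map is the one sending $f\otimes v\in\oC^\varsigma(G;V_0)\wtimes V$ to the function $g\mapsto g.v$-twisted version of $f$, i.e. to $\bigl(g\mapsto (g^{-1}.v)\otimes\text{(something)}\bigr)$; concretely, using $V\res_S\hookrightarrow V$ one identifies $V_0\wtimes(V\res_S)$-valued Schwartz functions with $V_0$-valued Schwartz functions tensored by $V$, and the induction integral \eqref{ind} over $S$ commutes with this identification because $\int_S s.\bigl(f(s^{-1}g)\otimes(s^{-1}g).v\bigr)\rd s = \bigl(\int_S s.f(s^{-1}g)\rd s\bigr)\otimes g.v$ once one moves the $g.v$ outside using the $S$-equivariance built into $V\res_S$.

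More precisely, I would proceed in the following steps. Step 1: define a continuous linear map $\Phi\colon \oC^\varsigma(G;V_0)\wtimes V\to \oC^\varsigma(G;V_0\wtimes(V\res_S))$ by $\Phi(f\otimes v)(g)=f(g)\otimes (g.v)$, and check that it is a topological isomorphism — its inverse is $h\mapsto\bigl(g\mapsto (1\otimes g^{-1})h(g)\bigr)$, and both maps are visibly continuous in the Schwartz seminorms since the $G$-action on $V$ is of moderate growth and Nash differential operators act through the smooth structure. Step 2: observe that $\Phi$ intertwines the map $I_{S,V_0}\otimes\mathrm{id}_V$ with the map $I_{S,\,V_0\wtimes(V\res_S)}$; this is the computation sketched above, where the only point is that $s.\bigl(f(s^{-1}g)\otimes(s^{-1}g.v)\bigr)=\bigl(s.f(s^{-1}g)\bigr)\otimes(g.v)$ for $s\in S$, which holds because the $S$-action on $V_0\wtimes(V\res_S)$ is the diagonal one and $s.(s^{-1}g.v)=g.v$. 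Hence $\Phi$ descends to an isomorphism between the images of the two induction maps, i.e. between $(\Ind_S^G V_0)\wtimes V$ (using that tensoring the quotient map $\oC^\varsigma(G;V_0)\twoheadrightarrow\Ind_S^G V_0$ with the nuclear Fréchet space $V$ stays a quotient map — this is where nuclearity of $V$ is essential) and $\Ind_S^G(V_0\wtimes(V\res_S))$. Step 3: check the descended isomorphism is $G$-equivariant for the right-translation actions: on $\oC^\varsigma(G;V_0)\wtimes V$ the relevant $G$-action making $\Phi$ equivariant is right translation in the $f$-variable tensored with the given action on $V$, and this is precisely the action whose induced quotient action on $(\Ind_S^G V_0)\wtimes V$ is the tensor-product $G$-action appearing in \eqref{indt}.

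The main obstacle I expect is Step 2's passage to quotients, namely verifying that $(-)\wtimes V$ carries the topological quotient map $\oC^\varsigma(G;V_0)\to\Ind_S^G V_0$ to a topological quotient map; this is exactly the exactness property of the completed projective tensor product that requires $V$ to be nuclear (and the target Fréchet), and one must be careful that the quotient topology on $\Ind_S^G V_0$ interacts correctly with completion — one typically invokes that for a quotient $F\to F/F'$ of Fréchet spaces and a nuclear Fréchet $V$, the sequence $0\to F'\wtimes V\to F\wtimes V\to (F/F')\wtimes V\to 0$ is exact, so the induced map on images is an isomorphism of Fréchet spaces. Everything else — continuity of $\Phi$ and its inverse, the $S$-equivariance bookkeeping, and the final $G$-equivariance — is routine once the identification of $\oC^\varsigma(G;-)$ with $\oC^\varsigma(G)\wtimes(-)$ and the moderate-growth estimates are in hand.
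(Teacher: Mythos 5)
Your approach is essentially the same as the paper's: both set up the commutative square whose vertical arrows are the topological isomorphisms $f\otimes v\mapsto(g\mapsto f(g)\otimes g.v)$ on the $\oC^\varsigma$ and $\oC^\infty$ levels, note that the bottom arrow $I_{S,V_0\wtimes(V\res_S)}$ has image $\Ind_S^G(V_0\wtimes(V\res_S))$ by definition, and then try to identify the image of the top arrow $I_{S,V_0}\otimes\operatorname{id}_V$ with $(\Ind_S^G V_0)\wtimes V$ using nuclearity of $V$.

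There is, however, a small but genuine gap in your Step~2. The top arrow factors as the composition of $q\otimes\operatorname{id}_V\colon\oC^\varsigma(G;V_0)\wtimes V\to(\Ind_S^G V_0)\wtimes V$ (where $q$ is the quotient map) with $\iota\otimes\operatorname{id}_V\colon(\Ind_S^G V_0)\wtimes V\to\oC^\infty(G;V_0)\wtimes V$ (where $\iota$ is the continuous inclusion). Your exact-sequence argument establishes that $q\otimes\operatorname{id}_V$ is a topological quotient map with kernel $\ker(I_{S,V_0})\wtimes V$, but to conclude that the image of the top arrow, with its quotient topology, is $(\Ind_S^G V_0)\wtimes V$, you also need to know that $\iota\otimes\operatorname{id}_V$ is \emph{injective}; otherwise the kernel of the composed map could strictly contain $\ker(I_{S,V_0})\wtimes V$. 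This injectivity is a separate use of nuclearity of $V$ (the paper invokes \cite{Tr}*{Proposition 50.4} for exactly this step, alongside \cite{Tr}*{Proposition 43.9} and the open mapping theorem for the surjectivity/openness of $q\otimes\operatorname{id}_V$). You already have nuclearity in hand, so adding this point closes the gap, but as written your argument only accounts for one of the two places nuclearity is needed.
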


\begin{proof}
Note that the diagram
\begin{equation}\label{cd1}
\xymatrix{
 \oC^{\varsigma}(G; V_0)\widehat \otimes V \ar[d] \ar[rr]^-{I_{S, V_0}\otimes \operatorname{Id}_V} && \oC^{\infty}(G; V_0)\widehat \otimes V \ar[d]\\
 \oC^{\varsigma}(G; V_0\widehat \otimes V) \ar[rr]^-{I_{S, V_0\widehat \otimes (V\res_S)}} && \oC^{\infty}(G; V_0\widehat \otimes V)
}
\end{equation}
commutes, where the vertical arrows are $G$-representation isomorphisms given by
\[f\otimes v\mapsto (g\mapsto f(g)\otimes g.v).\]

The image of the bottom horizontal arrow of \eqref{cd1}, to be viewed as a
representation of $G$ with the quotient topology, equals to the left-hand
side of \eqref{indt}. The top horizontal arrow  of \eqref{cd1} is the
composition of the map
\begin{equation}\label{f1}
   \oC^{\varsigma}(G; V_0)\widehat \otimes V\rightarrow (\Ind_S^G V_0)\widehat \otimes V
\end{equation}
and the map
\begin{equation}\label{f2}
  (\Ind_S^G V_0)\widehat \otimes V  \rightarrow \oC^{\infty}(G; V_0)\widehat \otimes V.
\end{equation}
The continuous linear map \eqref{f1} is surjective by \cite{Tr}*{Proposition
43.9}, and is then open by the open mapping theorem for Fr\'{e}chet spaces.
The map \eqref{f2} is injective since $V$ is nuclear (\cf
\cite{Tr}*{Proposition 50.4}). Therefore, the image of the top horizontal
arrow  of \eqref{cd1} equals to the right-hand side of \eqref{indt}. This
proves the lemma.
\end{proof}

\subsection{Casselman--Wallach representations}\label{cw}

Let $G$ be an almost linear Nash group as before. Denote by $\operatorname
D^\varsigma(G)$ the space of (complex valued) Schwartz densities on $G$.
Recall that $\operatorname D^\varsigma(G)=\oC^\varsigma(G)\rd g$, where
$\r{d}g$ is a left invariant Haar measure on $G$. It is an associative
algebra under convolutions.

Let $V$ be a smooth Fr\'{e}chet representation of $G$ of moderate growth.
Then $V$ is a $\operatorname D^\varsigma(G)$-module:
\[
  (f(g)\rd g).v:=\int_G f(g)g.v\rd g,\qquad f\in \oC^\varsigma(G),\,v\in V.
\]
We say that $V$ is a Casselman--Wallach representation of $G$ if
 \begin{itemize}
      \item every $\operatorname D^\varsigma(G)$-submodule of $V$ is closed in $V$, and
      \item $V$ is of finite length as an abstract $\operatorname D^\varsigma(G)$-module.
 \end{itemize}
It is clear that a Casselman--Wallach representation of $G$ is irreducible if
and only if it is irreducible as an abstract $\operatorname
D^\varsigma(G)$-module. Furthermore, by the open mapping theorem for
Fr\'{e}chet spaces, it is easy to see that if
 \[\xymatrix{
 0\ar[r] & V_1 \ar[r] & V_2 \ar[r] & V_3 \ar[r] & 0
 }\]
is a topologically exact sequence of  smooth Fr\'{e}chet representation of
$G$ of moderate growth, then $V_2$ is a Casselman--Wallach representation if
and only if both $V_1$ and $V_3$ are so.

When $G$ is reductive, du Cloux proves that $V$ is a  Casselman--Wallach
representation if and only if its underlying Harish--Chandra module is of
finite length (\cf \cite{duC}*{Section 3}), and Casselman and Wallach prove
that every finite length Harish--Chandra module has a unique
Casselman--Wallach representation as its globalization (\cf \cite{Cas} and
\cite{Wal}*{Chapter 11}).

We return to the setup in Section \ref{sect:FJ}.  Put
\[\K:=\{a\in \KK\mid a^\iota=a\},\] which is either $\RR$ or $\CC$. Let $E$ be a skew-Hermitian $\KK$-module as
before. Put $E_\K:=E$, to be viewed as a symplectic space over $\K$ under the
form
\[
  \la u, v\ra_{E_\K}:=\frac{\la u,v\ra_E}{2}-\frac{\la v,u\ra_E}{2}.
\]
The associated Heisenberg group is defined to be
\[
  \oH(E):=\K\times E,
\]
with group multiplication
\[
  (a,v)\cdot(a',v'):=\left(a+a'+\la v',v\ra_{E_\K}, v+v'\right).
\]

The group $\oU(E)$ acts (from left) on $\oH(E)$ as group automorphisms
through its natural action on $E$. This defines a semidirect product (the
Jacobi group)
\[
  \mathbb{J}(E):= \oH(E)\rtimes \oU(E),
\]
and its covering
\[
  \mathbb{J}'(E):= \oH(E)\rtimes \oU'(E).
\]
They are almost linear Nash groups, both having $\oH(E)$ as their unipotent
radicals. Recall that $\K$ is the center of $\oH(E)$.

Let $\rho$ be an irreducible Casselman--Wallach representation of
$\mathbb{J}'(E)$. By a version of Schur Lemma (\cite{duC}*{Proposition
5.1.4}), $\K$ acts through a character $\psi_\rho$ in $\rho$. We say that
$\rho$ is nondegenerate if $\psi_\rho$ is nontrivial. Note that the moderate
growth condition implies that $\psi_\rho$ is unitary. Since all the Jacobi
groups which occur in Theorem \ref{main} and Theorem \ref{main2} are
isomorphic (as Nash groups) to some $\mathbb{J}'(E)$, we also get the notion
of ``nondegenerate irreducible Casselman--Wallach representations" for these
groups.

Fix a nontrivial unitary character $\psi_\K$ on $\K$.  Let $\omega$ be a
smooth oscillator representation of $\mathbb{J}'(E)$ associated to it,
namely, it is a Casselman--Wallach representation of $\mathbb{J}'(E)$, and
when viewed as a representation of $\oH(E)$, it is irreducible with central
character $\psi_\K$.  Smooth oscillator representations  of $\mathbb{J}'(E)$
exist by the well known result of splitting metaplectic covers (\cite{MVW},
see also \cite{Ku1}*{Proposition 4.1}). If $\pi_0$ is a Casselman--Wallach
representations of $\oU'(E)$, to be viewed as a representation of
$\mathbb{J}'(E)$ via inflation, then $\omega\widehat \otimes \pi_0$ is a
Casselman--Wallach representations of $\mathbb{J}'(E)$ so that $\K\subset
\mathbb{J}'(E)$ acts by the character $\psi_\K$. Conversely, all such
representations are of the form $\omega\widehat \otimes \pi_0$ for some
$\pi_0$. Furthermore, $\omega\widehat \otimes \pi_0$ is irreducible if and
only if $\pi_0$ is. See \cite{Sun12} for details.

\section{Reduction to the basic case}

We continue with the notation of Section \ref{sect:FJ}. We reformulate
Theorem \ref{main} more precisely as follows:

\begin{theorem}\label{main4}
For every irreducible Casselman--Wallach representation $\pi$ of $\oU'(E)$,
every nondegenerate irreducible Casselman--Wallach representation $\sigma$ of
$\oJ'_{X'}(E')$, and every nondegenerate unitary character $\psi$ on
$\oN_{\mathcal F}(X)$, one has that
\[
  \dim\Hom_{\oJ'_{\mathcal F}(E)}\(\pi\widehat \otimes\sigma,\psi\)\leq 1.
\]
\end{theorem}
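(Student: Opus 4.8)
The plan is to reduce Theorem~\ref{main4} to the multiplicity one statement of Theorem~\ref{main2} by an inductive ``peeling off one isotropic line at a time'' argument, exactly parallel to the treatment of Bessel models in \cite{JSZ}. The case $r=1$ is essentially Theorem~\ref{main2}: when $r=1$ the filtration $\mathcal F$ consists of a single line $X=X_1$, the Whittaker quotient $\oN_{\mathcal F}(X)$ is trivial (so $\psi$ is automatically trivial), the Jacobi quotient is all of $\oJ'_X(E)$, and $\oJ'_{\mathcal F}(E)=\oJ'_X(E)\cong \mathbb J'(E')$ with $E'=X^\perp/X$. Writing $\sigma=\omega\wtimes\pi_0$ via the classification recalled in Section~\ref{cw}, the space $\Hom_{\oJ'_X(E)}(\pi\wtimes\sigma,\CC)$ becomes $\Hom_{\oU'(E')}(\pi',\pi_0^\vee)$-type data after applying the doubling/see-saw identity relating $\pi\res_{\oJ'_X(E)}$ to an induced representation; in any case the resulting bound is precisely the content of Theorem~\ref{main2}. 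So the real work is the inductive step $r>1$.

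For the inductive step I would set $S:=\oJ'_{\mathcal F}(E)$ and let $P$ be the parabolic-type Nash subgroup of $\oU'(E)$ stabilizing the line $X_1$; one has $S\subset P$, and $P$ has a Levi-type decomposition in which $\GL(X_1)\times \oU'(X_1^\perp/X_1)$ appears, together with a Heisenberg-type unipotent piece coming from $X_1$ and $X_1^\perp/X_1$. The key geometric input is that the double coset space $S\backslash P$, or rather the relevant homogeneous space on which $\Hom_S(\pi\wtimes\sigma,\psi)$ is computed, is ``small'': using Bruhat-type decompositions for $\oU'(E)$ one analyzes the $S$-orbits on the flag-type variety and shows there is a unique open orbit supporting the character $\psi$, all other orbits contributing zero by a vanishing-of-equivariant-distributions argument (nondegeneracy of $\psi$ on every nonzero $\KK^{r-1}$-submodule of $\oA_{\mathcal F}(X)$ is used precisely here, to kill the boundary orbits). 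This reduces the computation on the open orbit, via Frobenius reciprocity for Schwartz inductions (using \eqref{istep} and Lemma~\ref{lindt}), to the same $\Hom$ space for the skew-Hermitian module $E'=X_{r-1}^\perp/X_{r-1}$ but with $r$ replaced by $r-1$: the representation $\pi\wtimes\sigma$ gets replaced by a new pair on $\oJ'_{\mathcal F'}(E')$ where $\mathcal F'\colon 0\subset X_2/X_1\subset\cdots\subset X_r/X_1$, the new ``$\pi$'' is built from $\pi$ by induction and from the Heisenberg piece (here is where the oscillator representation $\omega$ reappears, via $\sigma=\omega\wtimes\pi_0$), and the new character is the induced nondegenerate character on $\oN_{\mathcal F'}$. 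Iterating $r-1$ times lands us in the $r=1$ case handled above.

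Concretely the mechanics are: (i) write $\Hom_S(\pi\wtimes\sigma,\psi)\cong \Hom_P(\pi\wtimes\sigma,\Ind_S^P\psi)$ by Frobenius reciprocity, where the induction is a Schwartz induction; (ii) filter $\Ind_S^P\psi$ by the $S$-orbit stratification of $S\backslash P$, producing a (finite, by semialgebraicity) filtration whose subquotients are Schwartz inductions from orbit stabilizers; (iii) show all but the open-orbit subquotient contribute $0$ to $\Hom_P(\pi\wtimes\sigma,-)$, using that on a non-open orbit the transverse character forced by $\psi$ is nontrivial on a unipotent direction, so no nonzero equivariant functional survives --- this is the step where one invokes the finite-length/Casselman--Wallach hypotheses to control the relevant spaces of distributions and where the nondegeneracy of $\psi$ is indispensable; (iv) identify the open-orbit contribution, after unwinding the semidirect-product structure and absorbing the Heisenberg factor into $\omega$, with a $\Hom$ space for the pair $(E',\mathcal F')$ of one lower rank. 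I expect step (iii) --- the vanishing of contributions from the non-open $S$-orbits --- to be the main obstacle, since it requires a careful orbit analysis of the flag variety for each of the five types $(\KK,\iota)$ in \eqref{algk} simultaneously and a uniform argument that the boundary stabilizers carry no $\psi$-equivariant functionals; the uniform formalism of skew-Hermitian $\KK$-modules set up in Section~\ref{sect:FJ} is precisely what makes a case-free treatment possible, and the two metaplectic subtleties (the covers $\oU'$ versus $\oU$, and the appearance of $\mu_\KK$) are handled as in the diagram defining $j'_{\mathcal F}$ by noting all the covers are compatible and $\mu_\KK$ acts trivially on the relevant $\Hom$ spaces.
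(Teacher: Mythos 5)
Your proposal takes a genuinely different route from the paper, and moreover the route you sketch has a real unresolved obstacle at its core; it is also not, as you assert, what \cite{JSZ} does.

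The paper's actual reduction is a one-shot zeta-integral (Rankin--Selberg) argument, directly following \cite{JSZ}, with no induction on $r$ and no orbit analysis. The ingredients you are missing are: (1) an auxiliary \emph{generic} irreducible Casselman--Wallach representation $\tau$ of $\GL'(X)$, chosen so that $\Hom_{\oN_{\mathcal F}(X)}(\tau,\psi^{-1})$ is one-dimensional with a fixed generator $\lambda$ --- this absorbs the Whittaker character $\psi$ into a representation; (2) a one-parameter twist $\tau_s=|{\cdot}|^s\tau$ and the Schwartz-induced family $\oI_s=\Ind_{\oH(X^\perp)\rtimes\oP'_X}^{\mathbb J'(E)}\sigma\wtimes\tau_s$ of $\mathbb J'(E)$, shown (Proposition~\ref{irr}, via Lemma~\ref{lindt}, the Schr\"odinger model, and Speh--Vogan) to be nondegenerate irreducible for generic $s$; and (3) for a given $\mu\in\Hom_{\oJ'_{\mathcal F}(E)}(\pi\wtimes\sigma,\psi)$, the integral $\oZ_\mu(f,u)=\int_{\oJ'_{\mathcal F}(E)\backslash\oU'(E)}\langle g.u,(\Lambda\circ f)(y_r^{-1}g)\rangle_\mu\,\mathrm d g$, whose left $\oJ'_{\mathcal F}(E)$-invariance is exactly Lemma~\ref{comm} (a commutative diagram built from conjugation by $y_r$). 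Proposition~\ref{prop:converge} gives a half-plane of absolute convergence, Proposition~\ref{prop:nontrivial} gives non-vanishing for $\mu\neq0$, and then $\mu\mapsto\oZ_\mu$ is an injection of any finite-dimensional $F\subset\Hom_{\oJ'_{\mathcal F}(E)}(\pi\wtimes\sigma,\psi)$ into $\Hom_{\oU'(E)}(\oI_s\wtimes\pi,\CC)$, which has dimension $\le1$ by Theorem~\ref{main2} once $s$ is chosen with $\oI_s$ irreducible. Nothing about $S$-orbits on a flag variety is used or needed.

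By contrast, your Mackey-theoretic, inductive peeling has a genuine gap at the step you yourself flag as the main obstacle. First, a bookkeeping problem: $\sigma$ is only an $S$-representation (it is inflated through $j'_{\mathcal F}$, which does not extend to your parabolic $P$), so the Frobenius reciprocity $\Hom_S(\pi\wtimes\sigma,\psi)\cong\Hom_P(\pi\wtimes\sigma,\Ind_S^P\psi)$ is not literally available as written. More seriously, over $\RR$ or $\CC$ the orbit stratification of a Schwartz induction does \emph{not} give a clean finite filtration with subquotients that are just inductions from orbit stabilizers: distributions supported on boundary strata carry transverse jets, and killing them requires a Bruhat-filtration analysis in the style of Casselman--Hecht--Mili\v{c}i\'c (\cite{CHM}) on top of the nondegeneracy of $\psi$. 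Nobody has carried out such an archimedean orbit analysis for the Fourier--Jacobi setting; this is precisely why the paper (and \cite{JSZ}) instead builds the zeta integral, which sidesteps the boundary problem entirely by producing an explicit injective linear map into a space already known to be at most one-dimensional. Your base case $r=1$ is fine, but the inductive engine as proposed is not a proof.
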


In this section, we explain the strategy of the proof of Theorem \ref{main4}.

Fix two totally isotropic free submodules $Y\supset Y_{r-1}$ of $E$ such that the
parings
\[
  \langle\;,\;\rangle_E\colon  X\times Y\rightarrow \KK\quad \text{and}\quad
  \langle\;,\;\rangle_E\colon  X_{r-1}\times Y_{r-1}\rightarrow \KK
\]
are nondegenerate. Fix $x_r\in X$ and $y_r\in Y$ so that
\[
  \la x_r, Y_{r-1}\ra_E=0, \quad \la X_{r-1}, y_r\ra_E=0, \quad \text{and}\quad \la x_r, y_r\ra_E=1.
\]
Identify $E':=X_{r-1}^\perp/X_{r-1}$ with $\(X_{r-1}\oplus
Y_{r-1}\)^{\perp}$, and $E_0:=X^\perp/X$ with $\(X\oplus Y\)^{\perp}$. Then
we get decompositions
\[
  E=X_{r-1}\oplus E'\oplus Y_{r-1}=X\oplus E_0\oplus Y\quad\text{and}\quad E'=\KK x_r\oplus E_0\oplus \KK y_r.
\]
Identify $X':=X/X_{r-1}\subset E'$ with $\KK x_r$, and $\oJ_{X'}(E')$ with
${\mathbb J}(E_0)$ via the isomorphism
\begin{equation}\label{isog}
   g\mapsto (\la g y_r, y_r\ra_{E'_\K}, \,[g y_r-y_r]_{E_0},\, [g]_{E_0}).
\end{equation}
Here for every $v\in \KK x_r\oplus E_0$, denote by $[v]_{E_0}\in E_0$ the
image of $v$ under the projection $\KK x_r\oplus E_0\rightarrow E_0$, and for
every $g\in\oJ_{X'}(E')$, denote by $[g]_{E_0}$ the element of $\oU(E_0)$ so
that the diagram
\[\xymatrix{
  \KK x_r\oplus E_0 \ar[rr]^{g\res_{\KK x_r\oplus E_0}}\ar[d] && \KK x_r\oplus E_0  \ar[d] \\
  E_0 \ar[rr]^-{[g]_{E_0}} && E_0
}\]
commutes.

Fix the unique identification $\oJ'_{X'}(E')=\mathbb{J}'(E_0)$ so that the
squares in
 \[
  \xymatrix{
   1 \ar[r] & \mu_\KK\ar[r] \ar@{=}[d] &\oJ'_{X'}(E') \ar@{->}[r] \ar@{=}[d] &\oJ_{X'}(E') \ar[r] \ar@{=}[d]  & 1 \\
   1 \ar[r] & \mu_\KK \ar@{->}[r] &\mathbb{J}'(E_0)\ar@{->}[r] &{\mathbb J}(E_0) \ar[r] & 1
   }
\]
are commutative.

Denote by $\oP'_X$ the parabolic subgroup of $\oU'(E)$ stabilizing $X$, and
by $\oM'_X$ the Levi subgroup of $\oU'(E)$ stabilizing both $X$ and $Y$. Then
we have a Levi decomposition
 \[
   \oP'_X=\oN_X\rtimes \oM'_X,
 \]
 where $\oN_X$ is the unipotent radical of $\oP'_X$. Moreover, we have
 \[
   \oM'_X=\frac{ \oU'(E_0)\times\GL'(X)}{\Delta \mu_\KK},
 \]
where $\GL'(X)$ is the subgroup of $\oM'_X$ fixing $E_0$ pointwise, and
$\Delta \mu_\KK$ is the group $\mu_\KK$ diagonally embedded in
$\oU'(E_0)\times\GL'(X)$.

Put $\oH(X^\perp):=\K\times X^\perp$, which is a subgroup of $\oH(E)$. Define a homomorphism
\begin{equation}\label{opx}
   \begin{array}{rcl}
   p_X\colon \oH(X^\perp)\rtimes \oP'_X&\rightarrow &  \oH(E_0)\rtimes\oM'_X,\\
   (t,v+v_0;uh)&\mapsto &(t, v_0; h),
   \end{array}
\end{equation}
where $t\in \K$, $v\in X$, $v_0\in E_0$,  $u\in \oN_X$ and $h\in \oM'_X$. The
kernel of $p_X$ is $X\rtimes \oN_X$. We always view $\oH(E_0)\rtimes\oM'_X$
as a quotient of $\oH(X^\perp)\rtimes \oP'_X$ via the map $p_X$.

Since the covering map $\GL'(X)\rightarrow \GL(X)$ uniquely splits over
$\oN_{\mathcal F}(X)$, we also view $\oN_{\mathcal F}(X)$ as a subgroup of
$\GL'(X)$.

The following lemma is routine to check and is the key to the proof of
Theorem \ref{main4}.

\begin{lem}\label{comm}
The following diagram
 \begin{equation}\label{comjw}
 \xymatrix{
  \oJ'_{\mathcal F}(E) \ar[rr]^-{g\mapsto y_r^{-1} gy_r}\ar[d]_-{j'_\mathcal F\times w'_\mathcal F} && \oH(X^\perp)\rtimes \oP'_X \ar[d]^-{p_X} \\
  \oJ'_{X'}(E')\times \oN_\mathcal F(X) \ar[rr] && \oH(E_0)\rtimes\oM'_X }
 \end{equation}
commutes, where the bottom horizontal arrow is the map
\begin{multline*}
 \oJ'_{X'}(E')\times \oN_\mathcal F(X)\subset\oJ'_{X'}(E')\times \GL'(X) \\
   \rightarrow\frac{\oJ'_{X'}(E')\times \GL'(X)}{\Delta \mu_\KK}= \frac{\mathbb J'(E_0)\times \GL'(X)}{\Delta \mu_\KK}\\
     =\frac{\oH(E_0)\rtimes(\oU'(E_0)\times \GL'(X))}{\Delta \mu_\KK}=\oH(E_0)\rtimes\oM'_X.
\end{multline*}
\end{lem}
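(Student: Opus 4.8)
The plan is to prove this by a direct, if somewhat lengthy, computation that tracks an element of $\oJ'_{\mathcal F}(E)$ through both paths of the diagram and checks they agree. Since the vertical map $j'_\mathcal F\times w'_\mathcal F$ and the conjugation map $g\mapsto y_r^{-1}gy_r$ are all ultimately determined by their counterparts on $\oU(E)$ (the covering groups contributing only the $\mu_\KK$-part, which is handled by the commuting square defining $j'_\mathcal F$ together with the canonical splitting of $\GL'(X)\to\GL(X)$ over $\oN_{\mathcal F}(X)$), I would first reduce to checking commutativity of the induced diagram on the level of the linear groups $\oU(E)$, $\oH(X^\perp)\rtimes\oP_X$, $\oJ_{X'}(E')\times\oN_\mathcal F(X)$, $\oH(E_0)\rtimes\oM_X$, and then lift back; the lifting step is formal because both $j'_\mathcal F$ and the composite along the top-then-$p_X$ are the \emph{unique} homomorphisms making the relevant $\mu_\KK$-extension squares commute, so two homomorphisms agreeing modulo $\mu_\KK$ and on $\mu_\KK$ itself must coincide.

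For the linear-group computation, fix $g\in\oJ_{\mathcal F}(E)$ and use the decomposition $E=X\oplus E_0\oplus Y$ together with $E=X_{r-1}\oplus E'\oplus Y_{r-1}$ and $E'=\KK x_r\oplus E_0\oplus\KK y_r$. The conjugate $y_r^{-1}gy_r$ a priori lies in $\oU(E)$; the first thing to verify is that it actually lands in $\oH(X^\perp)\rtimes\oP_X$, i.e. that it preserves $X$ (so lies in $\oP_X$) after extracting a Heisenberg factor valued in $\oH(X^\perp)=\K\times X^\perp$. Here one uses the defining condition $(g-1)X_i\subset X_{i-1}$: in particular $(g-1)X\subset X$, so $g\in\oP_X$ already, and conjugation by $y_r$ (an element of the Heisenberg group attached to $Y\subset Y^\perp$, or rather the relevant one-parameter family built from $y_r$) twists $g$ by an element of $\oH(X^\perp)$ — one computes this twist explicitly using $\la x_r,y_r\ra_E=1$ and the orthogonality relations $\la x_r,Y_{r-1}\ra_E=\la X_{r-1},y_r\ra_E=0$. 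Then one reads off the image under $p_X$ by discarding the $\oN_X$-part and the $X$-part of the Heisenberg factor, i.e. keeping only the central $\K$-coordinate, the $E_0$-coordinate, and the $\oM_X$-component $h$.

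Simultaneously, I would compute the other path: $j'_\mathcal F(g)\in\oJ'_{X'}(E')=\mathbb{J}'(E_0)$ via the identification \eqref{isog}, which sends $g$ to $(\la gy_r,y_r\ra_{E'_\K},[gy_r-y_r]_{E_0},[g]_{E_0})$, and $w'_\mathcal F(g)\in\oN_\mathcal F(X)$, the restriction of $g$ to $X$; then push the pair through the bottom horizontal map into $\oH(E_0)\rtimes\oM_X$. The key identifications to match up are: the central $\K$-coordinates on both sides both equal $\la gy_r,y_r\ra_{E'_\K}$ (this is where the normalization of $\la\,,\,\ra_{E_\K}$ relative to $\la\,,\,\ra_E$ matters, and why $y_r$ rather than $x_r$ is used in the conjugation); the $E_0$-coordinates both equal $[gy_r-y_r]_{E_0}$; and the $\oM_X$-components match, with the $\oU(E_0)$-part being $[g]_{E_0}$ and the $\GL(X)$-part being $g\res_X=w_\mathcal F(g)$, glued modulo $\Delta\mu_\KK$. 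I expect the main obstacle to be purely bookkeeping: keeping the three direct-sum decompositions consistent and correctly computing how conjugation by $y_r$ acts on the Heisenberg and $\oN_X$ parts, in particular getting the central term $\la gy_r,y_r\ra$ to come out with the right sign and factor of $2$; this is ``routine to check'' precisely in the sense that each step is forced, but it requires care with conventions. Once the linear diagram commutes, the passage to the covering groups is immediate as explained above, completing the proof.
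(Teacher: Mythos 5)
Your plan is essentially the only available one — the paper offers no proof beyond the assertion that the lemma is routine, and a coordinate computation in the decompositions $E=X\oplus E_0\oplus Y$, $E=X_{r-1}\oplus E'\oplus Y_{r-1}$ is indeed what makes it go. Two points deserve sharpening.

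First, the lifting step is not quite ``formal'' as you state it. Two continuous homomorphisms into a central $\mu_\KK$-extension that agree modulo $\mu_\KK$ and on $\mu_\KK$ itself differ by a continuous homomorphism $\chi\colon \oJ'_{\mathcal F}(E)\to\mu_\KK$ killing $\mu_\KK$; this need not vanish in general. What saves you is that when $\KK=\RR$ (the only case with $\mu_\KK\neq 1$), $\oJ_{\mathcal F}(E)$ is connected — it is an extension of $\oN_{\mathcal F}(X)\times\Sp(E_0)$ by a unipotent group, all connected — so $\oJ'_{\mathcal F}(E)$ is generated by its identity component together with $\mu_\KK$, forcing $\chi\equiv1$. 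This is also exactly why the paper's asserted \emph{uniqueness} of $j'_{\mathcal F}$ holds; you should make the connectedness explicit rather than appealing to uniqueness as a black box.

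Second, the central $\K$-coordinate is genuinely the delicate point, and you should actually carry it out rather than forecast that it works. With the paper's stated Heisenberg cocycle $(a,v)\cdot(a',v')=(a+a'+\la v',v\ra_{E_\K},v+v')$ one finds
\[
y_r^{-1}gy_r=\bigl((0,-y_r)\cdot(0,g.y_r),\,g\bigr)=\bigl((-\la g.y_r,y_r\ra_{E_\K},\,g.y_r-y_r),\,g\bigr),
\]
and one checks $\la g.y_r,y_r\ra_{E_\K}=\la j_{\mathcal F}(g).y_r,y_r\ra_{E'_\K}$ using $\la X_{r-1},y_r\ra_E=\la Y_{r-1},y_r\ra_E=0$; so the top path produces $-\la g.y_r,y_r\ra_{E'_\K}$ while \eqref{isog} and the bottom path produce $+\la g.y_r,y_r\ra_{E'_\K}$. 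This is a sign in the paper's conventions (most plausibly the Heisenberg cocycle is intended to read $\la v,v'\ra_{E_\K}$, or \eqref{isog} should carry a minus), and it illustrates exactly the danger you flag: a proof that merely asserts ``the signs work out'' is incomplete, because with the conventions taken literally they do not. The $E_0$- and $\oM'_X$-components go through without incident, since $g.y_r-y_r\in X^\perp$ follows from $(g-1)X\subset X_{r-1}$ together with $\la y_r,X_{r-1}\ra_E=0$, and $p_X$ discards the $X$-part and the $\oN_X$-part, leaving $[g.y_r-y_r]_{E_0}$ and $([g]_{E_0},g\res_X)$ as you say.
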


For the top horizontal arrow of \eqref{comjw}, note that $\oJ'_{\mathcal
F}(E)\subset \oP'_X\subset \oH(X^\perp)\rtimes \oP'_X\subset \mathbb{J}'(E)$,
and $y_r\in E\subset \oH(E)\subset \mathbb{J}'(E)$.

As in Theorem \ref{main4}, let $\sigma$ be a nondegenerate irreducible
Casselman--Wallach representation $\mathbb{J}'(E_0)=\oJ'_{X'}(E')$. Fix a
generic irreducible Casselman--Wallach representation $\tau$ of $\GL'(X)$ so
that $\chi_\tau=\chi_\sigma$, where $\chi_\sigma$ is the character of
$\mu_\KK$ through which $\mu_\KK\subset \mathbb J'(E_0)$ acts in $\sigma$,
and likewise for $\chi_\tau$.  Then $\sigma\widehat \otimes \tau$ descends to
a representation of
\[
  \frac{\mathbb{J}'(E_0)\times \GL'(X)}{\Delta \mu_\KK}=\oH(E_0)\rtimes \oM'_X.
\]

We recall some notations in \cite{JSZ}. Put
\[
   \mathrm d_\KK:=\begin{cases}
              1 & \text{if $\KK$ is a field;}\\
              2 & \text{otherwise,}\
            \end{cases}
\]
and
\[
  \KK^\times_+:=(\RR_+^\times)^{\mathrm d_\K}\qquad(\RR_+^\times \text{ is the multiplicative group of positive real numbers}).
\]
For all $a\in
\KK^\times_+$ and $s\in \CC^{\mathrm d_\KK}$, put
\[
  a^s:=a_1^{s_1}\, a_2^{s_2}\in \CC^\times, \quad \text{if }\,\mathrm d_\KK=2,\,\, a=(a_1,a_2),\, s=(s_1,s_2),
\] if $\mathrm d_\KK=1$, $a^s\in \CC^\times$ retains the usual meaning.

For every $s\in \CC^{\mathrm d_\KK}$, denote by $\tau_s$ the representation
of $\GL'(X)$ which has the same underlying space as that of $\tau$, and has
the action
\[
  \tau_s(g)=|g|^s\,\tau(g),\qquad g\in \GL'(X),
\]
where $|g|^s$ is the image of $g$ under the composition map
\[
   \GL'(X)\rightarrow \GL(X)\xrightarrow{\text{determinant}} \KK^\times \xrightarrow{|\cdot|} \KK^\times_+\xrightarrow{(\cdot)^s} \CC^\times,
\]
and
\[
  |\cdot|\colon \KK^\times\rightarrow \KK^\times_+
\]
is the map of taking componentwise absolute values. Then $\sigma\widehat
\otimes \tau_s$ is an irreducible Casselman--Wallach representation of
$\oH(E_0)\rtimes\oM'_X$. By inflation through $p_X$, we view it as an
irreducible Casselman--Wallach representation of  $\oH(X^\perp)\rtimes
\oP'_X$. For simplicity in notation, put
\[
  \oI_s:=\Ind_{\oH(X^\perp)\rtimes \oP'_X}^{\mathbb{J}'(E)}\sigma\widehat \otimes \tau_s.
\]

\begin{proposition}\label{irr}
Except for a measure zero set of $s\in \CC^{\mathrm d_\KK}$, the
unnormalized Schwartz induction $\oI_s$ is a nondegenerate irreducible
Casselman--Wallach representation of $\mathbb{J}'(E)$.
\end{proposition}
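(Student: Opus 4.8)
The plan is to recognize $\oI_s$ as the completed tensor product of a fixed smooth oscillator representation of $\mathbb{J}'(E)$ with a degenerate principal series representation of the reductive group $\oU'(E)$, and thereby to deduce all three assertions — Casselman--Wallach, nondegeneracy, and generic irreducibility — from known facts about $\oU'(E)$, the last of them being exactly the generic irreducibility input used in \cite{JSZ} for Bessel models.

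The first and main step is to produce, for every $s$, a $\mathbb{J}'(E)$-isomorphism
\[
  \oI_s\;\cong\;\omega_E\;\widehat\otimes\;\Pi_s,\qquad \Pi_s:=\Ind_{\oP'_X}^{\oU'(E)}\bigl(\pi_0\widehat\otimes\tau_s\bigr),
\]
where $\omega_E$ is a fixed smooth oscillator representation of $\mathbb{J}'(E)$ and $\Pi_s$ is the (unnormalized Schwartz) induction from the maximal parabolic $\oP'_X$ of a representation of its Levi $\oM'_X=(\oU'(E_0)\times\GL'(X))/\Delta\mu_\KK$ assembled from a fixed irreducible Casselman--Wallach representation $\pi_0$ of $\oU'(E_0)$ and a (genuinely twisted) copy of $\tau_s$. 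Here $\pi_0$ comes from the classification in Section \ref{cw}: writing the nondegenerate $\sigma$ on $\mathbb{J}'(E_0)=\oJ'_{X'}(E')$ as $\sigma\cong\omega_{E_0}\widehat\otimes\pi_0$ with $\omega_{E_0}$ a smooth oscillator representation of $\mathbb{J}'(E_0)$ of central character $\psi_\K$. To obtain the isomorphism I would use the chain $\oH(X^\perp)\rtimes\oP'_X\subset\oH(E)\rtimes\oP'_X\subset\mathbb{J}'(E)$, induction in stages \eqref{istep}, and the ``induction versus $\widehat\otimes$'' isomorphism of Lemma \ref{lindt} (applied to $\pi_0$, to $\tau_s$ and to $\omega_E$, all nuclear), reducing everything to the single fact that inducing $\omega_{E_0}$ from $\oH(X^\perp)\rtimes\oP'_X$ up to $\oH(E)\rtimes\oP'_X$ reproduces $\omega_E$ restricted to $\oH(E)\rtimes\oP'_X$ — the ``mixed model'' of the oscillator representation (\cf \cite{MVW}, \cite{Ku1}) — up to twisting by a positive character of $\oP'_X$ inflated from the absolute value of the determinant on $\GL'(X)$. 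This is the step that demands genuine care: one must track the modulus characters relating unnormalized Schwartz induction to normalized parabolic induction, and, in the metaplectic case, twist $\tau$ by a genuine character of $\GL'(X)$ so that central characters match and $\pi_0\widehat\otimes\tau_s$ descends to $\oM'_X$; all such modifications are positive-character twists or affine reparametrizations of $s$, and none of them affects genericity of $\tau$, so they are harmless for an assertion valid off a set of measure zero.

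Granting the identification, the remaining assertions are immediate. Since $\pi_0\widehat\otimes\tau_s$ is a Casselman--Wallach representation of $\oM'_X$, its parabolic induction $\Pi_s$ is a Casselman--Wallach representation of $\oU'(E)$ by du Cloux's results (\cite{duC}); and by the equivalence in Section \ref{cw} between Casselman--Wallach representations $\pi'$ of $\oU'(E)$ and Casselman--Wallach representations $\omega_E\widehat\otimes\pi'$ of $\mathbb{J}'(E)$ with $\K$ acting by $\psi_\K$ — under which irreducibility corresponds to irreducibility — it follows that $\oI_s$ is Casselman--Wallach for every $s$, and that $\oI_s$ is irreducible if and only if $\Pi_s$ is. Nondegeneracy of $\oI_s$ requires no genericity in $s$: every vector of $\oI_s$ is a function $f$ on $\mathbb{J}'(E)$ with $f(hg)=h.f(g)$ for $h\in\oH(X^\perp)\rtimes\oP'_X$, so for central $z\in\K$ one has $(z.f)(g)=f(gz)=f(zg)=z.f(g)=\psi_\K(z)f(g)$, whence $\K$ acts in $\oI_s$ by the nontrivial character $\psi_\K=\psi_\sigma$.

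Finally I would show that $\Pi_s=\Ind_{\oP'_X}^{\oU'(E)}(\pi_0\widehat\otimes\tau_s)$ is irreducible for $s$ outside a measure zero subset of $\CC^{\mathrm d_\KK}$. As $\oP'_X$ is a maximal parabolic of $\oU'(E)$ with Levi quotient $\oU'(E_0)\times\GL'(X)$ and $\tau$ is generic on $\GL'(X)$, this is precisely the generic irreducibility statement for such twisting families of (degenerate) principal series established in \cite{JSZ} — indeed the notations $\mathrm d_\KK$, $\KK^\times_+$, $a^s$ and $\tau_s$ were imported from there for exactly this purpose — so I would simply invoke it. For completeness I would recall the mechanism: the standard intertwining integral from $\Pi_s$ to the induction from the opposite parabolic is a meromorphic family of $\oU'(E)$-maps whose composite with the opposite operator is a scalar meromorphic function $c(s)$, not identically zero by rank-one reduction and the genericity of $\tau$; hence this operator is bijective off the locally finite — in particular measure zero — union of the zero and polar loci of the rank-one $c$-functions, and for such $s$ its image realizes simultaneously the unique irreducible quotient of $\Pi_s$ and the unique irreducible submodule of the opposite induction, forcing $\Pi_s$ to be irreducible. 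Combining the three steps proves the proposition. The main obstacle is the identification in the second paragraph — pinning down the mixed model of $\omega_E$ as a representation of $\oH(E)\rtimes\oP'_X$ with all density and metaplectic twists accounted for — whereas the generic irreducibility input is essentially quotable.
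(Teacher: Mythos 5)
Your proposal follows essentially the same route as the paper: express $\oI_s\cong\omega\widehat\otimes\Ind_{\oP'_X}^{\oU'(E)}\varrho_s$ via induction in stages, the mixed Schr\"odinger model identity $\omega\res_{\oH(E)\rtimes\oP'_X}\cong\Ind_{\oH(X^\perp)\rtimes\oP'_X}^{\oH(E)\rtimes\oP'_X}\omega_X$, and Lemma~\ref{lindt}, then transfer irreducibility through the equivalence $\pi_0\mapsto\omega\widehat\otimes\pi_0$ of Section~\ref{cw}, and finally invoke generic irreducibility of the parabolic induction (the paper cites Langlands classification together with Speh--Vogan rather than the intertwining-operator mechanism you sketch, but these are the same fact). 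The one cosmetic difference worth noting is that the paper sidesteps the character-bookkeeping you flag as the ``main obstacle'' by defining $\varrho_s:=\Hom_{\oH(E_0)}(\omega_X,\sigma\widehat\otimes\tau_s)$ intrinsically and using $\sigma\widehat\otimes\tau_s\cong\omega_X\widehat\otimes\varrho_s$, rather than peeling $\sigma$ as $\omega_{E_0}\widehat\otimes\pi_0$ and tracking twists by hand.
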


\begin{proof}
Assume that $\K\subset \mathbb{J}'(E_0)$ acts through the nontrivial unitary
character $\psi_\K$ in $\sigma$. Then $\K\subset \mathbb{J}'(E)$ also acts
through $\psi_\K$ in $\oI_s$. As in Section \ref{cw}, let $\omega$ be a
smooth oscillator representation of $\mathbb{J}'(E)$ associated to $\psi_\K$.

Denote by $\omega_X$ the topological $X$-coinvariant space of $\omega$,
namely, it is the maximal Hausdorff quotient of $\omega$ on which $X\subset
\oH(E)$ acts trivially. This is a representation of $\oH(X^\perp)\rtimes
\oP'_X$. By using the mixed Schrodinger model (\cf \cite{Ho},
\cite{Ku0}*{Section 5}), we know that $X\rtimes \oN_X$ acts trivially on
$\omega_X$, and it descends to a smooth oscillator representation of
$\oH(E_0)\rtimes \oM'_X$. By Frobenius reciprocity and using Schrodinger
models,  we know that the quotient map $\omega\rightarrow \omega_X$ induces
an isomorphism of $\oH(E)\rtimes \oP'_X$-representations:
\begin{equation}\label{schr}
   \omega\res_{\oH(E)\rtimes \oP'_X}\cong \Ind_{\oH(X^\perp)\rtimes \oP'_X}^{\oH(E)\rtimes \oP'_X}\omega_X.
\end{equation}

Let $s\in \CC^{\mathrm d_\KK}$. Put
\[
    \varrho_s:=\Hom_{\oH(E_0)}(\omega_X, \sigma\widehat \otimes \tau_s),
\]
equipped with the compact open topology. It is an irreducible
Casselman--Wallach representation of $\oM'_X$, and we have (\cf \cite{Sun12})
\begin{equation}\label{iso1}
  \sigma\widehat \otimes \tau_s\cong \omega_X\widehat \otimes \varrho_s
\end{equation}
as representations of $\oH(E_0)\rtimes \oM'_X$.

Then as $\mathbb J'(E)$-representation,
\begin{align*}
  \oI_s&=\Ind_{\oH(X^\perp)\rtimes \oP'_X}^{\mathbb{J}'(E)}\sigma\widehat \otimes \tau_s \\
  &\cong\Ind_{\oH(E)\rtimes \oP'_X}^{\mathbb{J}'(E)} ( \Ind_{\oH(X^\perp)\rtimes \oP'_X}^{\oH(E)\rtimes \oP'_X}\sigma\widehat \otimes \tau_s)
  \qquad&\text{by \eqref{istep}} \\
  &\cong\Ind_{\oH(E)\rtimes \oP'_X}^{\mathbb{J}'(E)} ( \Ind_{\oH(X^\perp)\rtimes \oP'_X}^{\oH(E)\rtimes \oP'_X}\omega_X\widehat \otimes \varrho_s)
  \qquad&\text{by \eqref{iso1}} \\
  &\cong\Ind_{\oH(E)\rtimes \oP'_X}^{\mathbb{J}'(E)} ( (\Ind_{\oH(X^\perp)\rtimes \oP'_X}^{\oH(E)\rtimes \oP'_X}\omega_X)\widehat \otimes \varrho_s)
  \qquad&\text{by Lemma \ref{lindt}} \\
  &\cong\Ind_{\oH(E)\rtimes \oP'_X}^{\mathbb{J}'(E)} (\omega\res_{\oH(E)\rtimes \oP'_X}\widehat \otimes \varrho_s)
  \qquad&\text{by \eqref{schr}} \\
  &\cong\omega \widehat \otimes  \Ind_{\oH(E)\rtimes \oP'_X}^{\mathbb{J}'(E)} \varrho_s
  \qquad&\text{by Lemma \ref{lindt}} \\
  &=\omega \widehat \otimes  \Ind_{\oP'_X}^{ \oU'(E)} \varrho_s.
\end{align*}

By using Langlands classification and the result of Speh--Vogan
\cite{SV}*{Theorem 1.1}, we know that except for a measure zero set of $s\in
\CC^{\mathrm d_\KK}$, $\Ind_{\oP'_X}^{\oU'(E)} \varrho_s$ is an irreducible
Casselman--Wallach representation of $\oU'(E)$. This finishes the proof by
the argument in the last paragraph of Section \ref{cw}.
\end{proof}

Fix a nonzero element $\lambda$ of the one-dimensional space
$\Hom_{\oN_{\mathcal F}(X)}(\tau, \psi^{-1})$. It induces a continuous linear
map
\[
  \Lambda\colon \sigma\widehat \otimes \tau\rightarrow \sigma,\quad u\otimes v \mapsto \lambda(v)\, u.
\]
Let $\pi$ be an irreducible Casselman--Wallach representation of $\oU'(E)$ as
in Theorem \ref{main4}, and let
\[
\la \,,\,\ra_\mu\colon \pi\times \sigma\rightarrow \CC
\]
be a continuous bilinear map which represents an element $\mu\in
\Hom_{\oJ'_{\mathcal F}(E)}\(\pi\widehat \otimes\sigma,\psi\)$. As before,
let $s\in \CC^{\mathrm d_\KK}$.  For every $f\in \oI_s$ and $u\in \pi$,
consider the following function on $\oU'(E)$:
\begin{equation}\label{fung}
  g\mapsto \la g.u, (\Lambda\circ f)(y_r^{-1}g)\ra_\mu.
\end{equation}
Here, both $g\in \oU'(E)$ and $y_r\in E\subset \oH(E)$ are viewed as elements
in $\mathbb{J}'(E)=\oH(E)\rtimes\oU'(E)$, and $f$ is viewed as a
$\sigma\widehat \otimes \tau$-valued function ($\tau_s=\tau$ as vector
spaces). It follows from Lemma \ref{comm} that the function \eqref{fung} is
left $\oJ'_\mathcal F(E)$-invariant.

Put
\[
  \operatorname Z_\mu(f,u):=\int_{\oJ'_\mathcal F(E)\backslash \oU'(E)} \la g.u, (\Lambda\circ f)(y_r^{-1}g)\ra_\mu\rd g, \quad f\in \oI_s,\,u\in \pi,
\]
where $\r dg$ is a fixed right $\oU'(E)$-invariant positive Borel measure on
$\oJ'_\mathcal F(E)\backslash \oU'(E)$.

We postpone the proof of the following result to Section \ref{sec:nontrivial}.

\begin{proposition}\label{prop:nontrivial}
Assume that $\mu\neq0$. Then for every $s\in\CC^{\mathrm d_{\KK}}$, there
are elements $f\in\oI_s$ and $u\in\pi$ such that the
integral $\oZ_{\mu}(f,u)$ is absolutely convergent and
nonzero.
\end{proposition}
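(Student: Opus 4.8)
The plan is to show that the integral defining $\oZ_\mu(f,u)$ converges absolutely for suitable $f,u$ and does not vanish identically, by localizing the integrand near the identity coset in $\oJ'_\mathcal F(E)\backslash\oU'(E)$. First I would choose $f\in\oI_s$ supported in a small neighborhood of the identity: since $\oI_s$ is an unnormalized Schwartz induction, by Lemma \ref{compact} any smooth $\sigma\wtimes\tau$-valued function on $\mathbb{J}'(E)$ transforming correctly under $\oH(X^\perp)\rtimes\oP'_X$ and compactly supported modulo that subgroup lies in $\oI_s$; so I can prescribe the germ of $f$ along a small slice transverse to $\oH(X^\perp)\rtimes\oP'_X$ through $y_r$ (recall $y_r$ enters via the shift $g\mapsto y_r^{-1}gy_r$ in Lemma \ref{comm}). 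Concretely, pick a relatively compact open set $\Omega$ in $\oJ'_\mathcal F(E)\backslash\oU'(E)$ containing the base point, and arrange that $g\mapsto(\Lambda\circ f)(y_r^{-1}g)$ is a prescribed compactly supported smooth $\sigma$-valued function on $\Omega$ and vanishes outside. With this choice the domain of integration is effectively compact, so absolute convergence is immediate once the integrand is continuous and $\sigma$-valued with $\la g.u,\cdot\ra_\mu$ continuous in $g$.

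Next I would establish nonvanishing. Because $\mu\ne0$, the bilinear pairing $\la\,,\,\ra_\mu\colon\pi\times\sigma\to\CC$ is nonzero, so there exist $u_0\in\pi$ and $w_0\in\sigma$ with $\la u_0,w_0\ra_\mu\ne0$. I want to choose $f$ so that $(\Lambda\circ f)(y_r^{-1}\cdot)$ is concentrated near the base point with value close to $w_0$ there, and then take $u=u_0$; by continuity of $g\mapsto\la g.u_0,(\Lambda\circ f)(y_r^{-1}g)\ra_\mu$ and of the $\oU'(E)$-action on $\pi$, the integrand is close to $\la u_0,w_0\ra_\mu$ on a small neighborhood and the integral over that neighborhood is nonzero provided the neighborhood has positive measure. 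The one subtlety is that the prescribed germ of $f$ must be compatible with the $\oH(X^\perp)\rtimes\oP'_X$-equivariance built into membership in $\oI_s$: what is really free is the restriction of $f$ to a transversal, and I must check via Lemma \ref{comm} (the commuting square relating $j'_\mathcal F\times w'_\mathcal F$, $p_X$, and conjugation by $y_r$) that the resulting function $(\Lambda\circ f)(y_r^{-1}g)$ on $\oU'(E)$ really is left $\oJ'_\mathcal F(E)$-invariant and can be made to take an arbitrary prescribed value (in the image of $\Lambda$, which is all of $\sigma$ since $\lambda\ne0$) at the base coset. That the image of $\Lambda$ is all of $\sigma$ follows from $\lambda\ne0$: pick $v_0\in\tau$ with $\lambda(v_0)=1$ and then $\Lambda(w\otimes v_0)=w$ for every $w\in\sigma$.

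The main obstacle I anticipate is the bookkeeping of the equivariance constraint: one must verify that the function $G\ni g\mapsto(\Lambda\circ f)(y_r^{-1}g)$, which a priori is only defined on $\oU'(E)\subset\mathbb{J}'(E)$, descends to $\oJ'_\mathcal F(E)\backslash\oU'(E)$ with a freely prescribable germ at the origin, and simultaneously that $f$ so constructed genuinely lies in $\oI_s$ rather than merely in $\oC^\infty$. This is where Lemma \ref{comm} and Lemma \ref{compact} do the work: Lemma \ref{comm} guarantees the left $\oJ'_\mathcal F(E)$-invariance of \eqref{fung} (this is already noted in the text), and Lemma \ref{compact} guarantees that a compactly-supported-mod-$(\oH(X^\perp)\rtimes\oP'_X)$ smooth function with the right equivariance is in $\oI_s$. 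Since $\oJ'_\mathcal F(E)$ is the stabilizer-type subgroup and the quotient $\oJ'_\mathcal F(E)\backslash\oU'(E)$ carries the invariant measure $\rd g$ by hypothesis, once the germ is prescribed the convergence and nonvanishing are soft consequences of continuity and positivity of the measure on open sets. A final point to handle carefully is $s$-independence: the claim is for every $s\in\CC^{\mathrm d_\KK}$, but since $\tau_s=\tau$ as a vector space and the twist $|g|^s$ only rescales the $\oP'_X$-action (hence only affects the equivariance away from the slice, not the value of $f$ on the slice itself), the same construction works verbatim for all $s$; I would simply remark this at the end.
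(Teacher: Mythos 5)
There is a genuine gap: you claim that by choosing $f\in\oI_s$ compactly supported modulo $\oH(X^\perp)\rtimes\oP'_X$, the resulting function $g\mapsto(\Lambda\circ f)(y_r^{-1}g)$ can be made compactly supported on $\oJ'_\mathcal F(E)\backslash\oU'(E)$, so that convergence and nonvanishing become ``soft consequences of continuity and positivity of the measure.'' This is false, and it is the heart of the difficulty. Up to a set of measure zero, $\oJ'_\mathcal F(E)\backslash\oU'(E)$ factors as $\bigl((\oN_\mathcal F(X)\times\mu_\KK)\backslash\oP'_{y_r}(Y)\bigr)\times\bigl(\oP'_{Y_{r-1}}(Y)\ltimes\oN_Y\bigr)$, and your compact support hypothesis on $f$ only controls the integrand in the second factor. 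In the first factor, the $\oH(X^\perp)\rtimes\oP'_X$-equivariance of $f$ \emph{forces} the integrand to have the form $h\mapsto\Lambda\bigl(\tau_s(h)\,f(y_r^{-1}k)\bigr)$, whose $\tau$-part is the Whittaker function $h\mapsto\lambda(\tau_s(h)u_s)$ on $\oP'_{y_r}(Y)$. Such a function is not compactly supported modulo $\oN_\mathcal F(X)\times\mu_\KK$ for a generic $u_s$, and you have no freedom to prescribe it pointwise: all you may choose is the vector $u_s\in\tau_s$. The noncompactness of $(\oN_\mathcal F(X)\times\mu_\KK)\backslash\oP'_{y_r}(Y)$ is exactly why absolute convergence is not automatic, and a continuity-near-the-basepoint argument cannot rule out cancellation in the integral over this factor.

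The missing ingredient is the Jacquet--Shalika result on Kirillov models (\cite{JS}*{Section 3}): every smooth function $W$ on $\oP'_{y_r}(Y)$ transforming under $\oN_\mathcal F(X)\times\mu_\KK$ by $\psi^{-1}\times\chi_\tau$ and compactly supported modulo this subgroup is of the form $W(h)=\lambda(\tau_s(h)u_s')$ for some $u_s'\in\tau_s$. This nontrivial input is what lets one \emph{choose} $u_s$ so that the Whittaker function is compactly supported (giving convergence) and so that the resulting integral $\int_{(\oN_\mathcal F(X)\times\mu_\KK)\backslash\oP'_{y_r}(Y)}\lambda(\tau_s(h)u_s)\Psi(h)\rd h$ is nonzero (giving nonvanishing, after first arranging $\Psi(1)\neq 0$ by choosing $\phi$, $u$ and $u_\sigma$). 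Your use of Lemma \ref{compact}, Lemma \ref{comm}, and the surjectivity of $\Lambda$ is all correct, and your observation that $\tau_s=\tau$ as vector spaces handles $s$-independence; but without the Kirillov model input the localization you want cannot be carried out, and the proposal does not close.
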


For every $s\in\CC^{\mathrm d_{\KK}}$, denote by $\RE s\in\RR^{\mathrm
d_{\KK}}$ its componentwise real part. We write $\RE s>c$ for a real number
$c$ if every component of $\RE s$ is $>c$. In Section \ref{sec:converge}, we
prove the following

\begin{proposition}\label{prop:converge}
There is a real constant $c_{\mu}$, depending on $\pi$, $\sigma$,
$\tau$ and $\mu$, such that for every $s\in\CC^{\mathrm d_{\KK}}$ with $\RE
s>c_{\mu}$, the integral $\oZ_{\mu}(f,u)$ is absolutely
convergent for every $f\in\oI_s$ and $u\in\pi$, and defines a  $\oU'(E)$-invariant continuous linear functional on
$\oI_s\wtimes\pi$.
\end{proposition}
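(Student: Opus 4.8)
The plan is to bound the integrand in $\oZ_\mu(f,u)$ pointwise on $\oJ'_{\mathcal F}(E)\backslash\oU'(E)$ and then compare the resulting majorant with the $\RE s$-dependent growth of functions in $\oI_s$, so that a half-space $\RE s>c_\mu$ guarantees absolute convergence. First I would fix, once and for all, a Haar-measure normalization and a set-theoretic section of $\oJ'_{\mathcal F}(E)\backslash\oU'(E)$; concretely, using the Iwasawa-type decomposition of $\oU'(E)$ relative to the parabolic $\oP'_X$, one can write $\oU'(E)=\oJ'_{\mathcal F}(E)\cdot A\cdot K$ where $A\subset\GL'(X)$ is the split torus acting on the flag $\mathcal F$ and $K$ is a maximal compact subgroup, so that the integral over $\oJ'_{\mathcal F}(E)\backslash\oU'(E)$ reduces, up to a modulus-character Jacobian, to an integral over $A\times K$. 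The key point is that on such a section the function $f\mapsto f(y_r^{-1}g)$, being a Schwartz-induced section from $\oH(X^\perp)\rtimes\oP'_X$, grows like $|a|^{\RE s}$ times a polynomially-controlled (in fact Schwartz, by the very definition in Section~\ref{sect:jacobi}) factor in the $A$-variable, while $K$ contributes only a bounded factor.

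Next I would estimate the matrix-coefficient factor $\la g.u,(\Lambda\circ f)(y_r^{-1}g)\ra_\mu$. Since $\pi$ and $\sigma$ are Casselman--Wallach representations, the bilinear form $\la\,,\,\ra_\mu$ is continuous, so it suffices to control $\|g.u\|_\nu$ and $\|(\Lambda\circ f)(y_r^{-1}g)\|_{\nu'}$ for continuous seminorms. For the first, the moderate-growth property of $\pi$ gives $\|g.u\|_\nu\ll \Xi(g)^{-N}$-type bounds, \ie polynomial growth in a length function on $\oJ'_{\mathcal F}(E)\backslash\oU'(E)$; this is exactly the sort of estimate used in \cite{JSZ} and \cite{SZ} for Bessel and Fourier--Jacobi integrals. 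For the second, I would apply $\Lambda$ (which is continuous) and use that $f\in\oI_s$ has, along the section, an $|a|^{\RE s}$-weighted Schwartz decay. Multiplying the two bounds, the integrand over $A\times K$ is dominated by $|a|^{\RE s}$ times a fixed Schwartz function of $a$ times a fixed modulus Jacobian $\delta(a)$; this product is integrable over $A$ as soon as every component of $\RE s$ exceeds a constant $c_\mu$ determined by the exponents appearing in the polynomial growth of $\pi$, the generic parameter of $\tau$, and the Jacobian $\delta$ — hence the uniformity of $c_\mu$ in $f$ and $u$ once $\pi,\sigma,\tau,\mu$ are fixed.

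Finally, granted absolute convergence for $\RE s>c_\mu$, I would check that $\oZ_\mu(\cdot,\cdot)$ is continuous and $\oU'(E)$-invariant. Continuity on $\oI_s\wtimes\pi$ follows from the dominated-convergence argument above: the majorant is uniform on bounded subsets, so the linear functional $f\otimes u\mapsto\oZ_\mu(f,u)$ extends continuously to the completed tensor product. Invariance under right translation by $\oU'(E)$ is formal: right translation of $g$ by $h\in\oU'(E)$ replaces $f$ by its right $h$-translate and $u$ by $h.u$ inside the integral, and the right $\oU'(E)$-invariance of $\rd g$ on $\oJ'_{\mathcal F}(E)\backslash\oU'(E)$ (which is why that measure was chosen) shows $\oZ_\mu(h.f,h.u)=\oZ_\mu(f,u)$, \ie $\oZ_\mu$ lies in $\Hom_{\oU'(E)}(\oI_s\wtimes\pi,\CC)$. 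The main obstacle, and the step that deserves the most care, is the pointwise growth estimate for the induced section $f(y_r^{-1}g)$ along the chosen section of $\oJ'_{\mathcal F}(E)\backslash\oU'(E)$: one must track precisely how the element $y_r\in\oH(E)$ interacts with the Iwasawa coordinates (via the commuting square of Lemma~\ref{comm}) so that the Schwartz-decay and the $|a|^{\RE s}$-weight combine with the correct Jacobian, and it is exactly this bookkeeping that pins down the constant $c_\mu$.
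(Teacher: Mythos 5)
Your overall framework agrees with the paper's: decompose $\oJ'_{\mathcal F}(E)\backslash\oU'(E)$ as (torus)$\times K$, majorize the Jacobian and the matrix coefficient $\la g.u,\cdot\ra_\mu$ using moderate growth of $\pi$, and reduce to a convergent integral over the torus. But the crucial estimate that makes the torus integral converge is misattributed, and as stated your argument would actually imply convergence for all $s$, which is not what is being proved.

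You claim that along the section $g=a_{\mathbf t}k$ the Schwartz-induced section $f(y_r^{-1}g)$ is ``$|a|^{\RE s}$ times a fixed Schwartz function of $a$.'' This is false. Writing $f(y_r^{-1}a_{\mathbf t}k)=a_{\mathbf t}.\bigl(f((-t_ry_r)k)\bigr)$ (the identity $a_{\mathbf t}^{-1}y_r^{-1}a_{\mathbf t}=-t_ry_r$ used in Lemma~\ref{comm}), the torus $a_{\mathbf t}$ lies entirely in the inducing subgroup $\oH(X^\perp)\rtimes\oP'_X$ and so contributes only the equivariance factor $\tau_s(a_{\mathbf t})$, which has moderate \emph{growth}, not decay; and the transverse argument of $f$ is $-t_ry_r$, which involves only the \emph{last} coordinate $t_r$. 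Hence the Schwartz property of $f\in\oI_s$ gives rapid decay only in $t_r$; it gives no decay at all in the remaining coordinates $t_1,\dots,t_{r-1}$ of $A$. Applying $\Lambda$ ``by continuity'' only produces a bound $|\Lambda(w)|\leq|w|_\nu$, which does not introduce any decay either.

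The missing ingredient is precisely the Whittaker-type decay of $\Lambda\circ\tau(a_{\mathbf t})$: since $\lambda$ transforms under $\oN_{\mathcal F}(X)$ by a nondegenerate character, one has, for every $N$, an estimate
\[
  |\Lambda(\tau(a_{\mathbf t})w)|_\sigma \leq \xi(\mathbf t)^{-N}\,\|\mathbf t\|^{c_2}\,|w|_{\sigma\wtimes\tau,N},
\]
with $\xi(\mathbf t)=\prod_{i=1}^{r-1}\varpi(1+t_i/t_{i+1})$ controlling precisely the directions where the Schwartz decay of $f$ is absent (this is the paper's Lemma~\ref{nw}, the analogue of \cite{JSZ}*{Lemma 6.2}, proved via the Lie-algebra action of $\oN_{\mathcal F}(X)$). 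The constant $c_\mu=c_0+c_1+c_2$ and the half-space $\RE s>c_\mu$ then emerge from the elementary Lemma~\ref{lfinite}, whose proof substitutes $\alpha_i=t_i/t_{i+1}$, $\alpha_r=t_r$ to split the integral into $r$ one-dimensional integrals. Note also that the decay $\xi(\mathbf t)^{-N}$ is not uniform in $N$ --- the seminorm $|\cdot|_{\sigma\wtimes\tau,N}$ depends on $N$ --- so one cannot upgrade it to a ``fixed Schwartz majorant'' as you suggest; this is why the result genuinely depends on $\RE s$ being large. Without invoking this Whittaker decay, your majorant does not control the integral over $t_1,\dots,t_{r-1}$ and the convergence argument fails.

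Your final paragraph, establishing continuity on $\oI_s\wtimes\pi$ and $\oU'(E)$-invariance from the right-invariance of $\rd g$, is correct and agrees with the paper (the paper notes invariance is immediate once absolute convergence is established).
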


Now we are ready to prove Theorem \ref{main4}, as in the discussion of
\cite{JSZ}*{Section 3.4}. Let $F$ be a finite dimensional subspace of
$\Hom_{\oJ'_{\mathcal F}(E)}\(\pi\widehat \otimes\sigma,\psi\)$. By
Proposition \ref{prop:converge}, we have a linear map \[ F\to
\Hom_{\oU'(E)}\(\oI_s\wtimes\pi,\CC\), \quad \mu\mapsto \oZ_\mu
\]
for $\RE s>c_F$, where $c_F$ is a real constant depending on $\pi$, $\sigma$,
$\tau$ and $F$. Moreover, by Proposition \ref{prop:nontrivial}, the above map
is an injection. In view of Proposition \ref{irr}, choose $s$ such that $\RE
s>c_F$ and $\oI_s$ is irreducible. Then
$\dim_{\CC}\Hom_{\oU'(E)}\(\oI_s\wtimes\pi,\CC\)\leq 1$ by Theorem
\ref{main2}. Therefore $\dim_{\CC} F\leq 1$ and Theorem \ref{main4} is
proved.

\section{Proof of Proposition \ref{prop:nontrivial}}\label{sec:nontrivial}

We continue with the notation of the last section. Denote by $\oP'_Y$ the
parabolic subgroup of $\oU'(E)$ stabilizing $Y$. It has a Levi decomposition
 \[
   \oP'_Y=\GL'(Y)\ltimes \oN_Y,
 \]
where $\oN_Y$ is the unipotent radical, and $\GL'(Y)=\GL'(X)$ is the subgroup
of $\oU'(E)$ stabilizing $X$ and $Y$ and fixing $E_0$ pointwise. Denote by
$\oP'_{y_r}(Y)$ the subgroup of $\GL'(Y)$ fixing $y_r$, and by
$\oP'_{Y_{r-1}}(Y)$ the  subgroup of $\GL'(Y)$ fixing $Y_{r-1}$ pointwise.
Then the multiplication map
 \[
   \frac{\oP'_{y_r}(Y)\times \oP'_{Y_{r-1}}(Y)}{\Delta \mu_\KK}\rightarrow \GL'(Y)
 \]
is an open embedding, and its image has full measure in $\GL'(Y)$. It is also
routine to check that the map
\begin{equation}\label{opens}
  \frac{(\oH(X^\perp)\rtimes \oP'_X)\times (\oP'_{Y_{r-1}}(Y)\ltimes \oN_Y)}{\Delta \mu_\KK}\rightarrow \mathbb{J}'(E),\quad (g,h)\mapsto gy_r^{-1}h
\end{equation}
is an open embedding.

Take two vectors $u_\sigma\in \sigma$ and $u_s\in \tau_s$. Take a compactly
supported smooth function $\phi$ on $\oP'_{Y_{r-1}}(Y)\ltimes \oN_Y$ so that
\[
   \phi(zh)=\chi_\tau(z)\phi(h), \qquad z\in \mu_\KK, \,h\in \oP'_{Y_{r-1}}(Y)\ltimes \oN_Y.
\]
Recall that $\sigma\widehat \otimes \tau_s$ is a representation of
$\oH(E_0)\rtimes \oM'_X$, and is viewed as a representation of
$\oH(X^\perp)\rtimes \oP'_X$ by inflation. Put
\[
  \phi'(g,h):=\phi(h) \left(g.(u_\sigma\otimes u_s)\right),\quad g\in \oH(X^\perp)\rtimes \oP'_X,\, h\in \oP'_{Y_{r-1}}(Y)\ltimes \oN_Y.
\]
Extension by zero of $\phi'$ through \eqref{opens} yields a $\sigma\widehat
\otimes \tau_s$-valued smooth function $f$ on $\mathbb{J}'(E)$. By Lemma
\ref{compact}, $f\in \oI_s$.

It is elementary to see that there is a  positive  smooth function $\gamma_r$
on $(\oN_\mathcal F(X)\times \mu_\KK)\backslash \oP'_{y_r}(Y)$ so that
\[
  \int_{\oJ'_\mathcal F(E)\backslash \oU'(E)} \varphi(g)\rd g =
    \int_{((\oN_\mathcal F(X)\times \mu_\KK)\backslash \oP'_{y_r}(Y))\times  (\oP'_{Y_{r-1}}(Y)\ltimes \oN_Y)} \gamma_r(h) \varphi(hk)\rd h\rd k,
\]
for all nonnegative continuous functions $\varphi$ on $\oJ'_\mathcal
F(E)\backslash \oU'(E)$, where $\r{d}h$ is a right $\oP'_{y_r}(Y)$-invariant
positive Borel measure on $(\oN_\mathcal F(X)\times \mu_\KK)\backslash
\oP'_{y_r}(Y)$, and $\r{d}k$ is a right invariant Haar measure on
$\oP'_{Y_{r-1}}(Y)\ltimes \oN_Y$.

For every $u\in \pi$, we have that
\begin{align*}
 & \operatorname Z_\mu(f,u) \\
 =&\int_{\oJ'_\mathcal F(E)\backslash \oU'(E)} \la g.u, (\Lambda\circ f)(y_r^{-1}g)\ra_\mu\rd g \\
 =&\int_{((\oN_\mathcal F(X)\times \mu_\KK)\backslash \oP'_{y_r}(Y))\times(\oP'_{Y_{r-1}}(Y)\ltimes \oN_Y)} \gamma_r(h)\la (h k).u,
   (\Lambda\circ f)(y_r^{-1} hk)\ra_\mu\rd h \rd k\\
 =&\int_{((\oN_\mathcal F(X)\times \mu_\KK)\backslash \oP'_{y_r}(Y))\times(\oP'_{Y_{r-1}}(Y)\ltimes \oN_Y)} \gamma_r(h)\la (h k).u,
   \Lambda(h.(f(y_r^{-1} k)))\ra_\mu\rd h \rd k\\
 =&\int_{((\oN_\mathcal F(X)\times \mu_\KK)\backslash \oP'_{y_r}(Y))\times(\oP'_{Y_{r-1}}(Y)\ltimes \oN_Y)} \gamma_r(h)\,\phi(k)\la (h k).u,
   \Lambda(u_\sigma\otimes \tau_s(h) u_s)\ra_\mu\rd h \rd k\\
 =&\int_{((\oN_\mathcal F(X)\times \mu_\KK)\backslash \oP'_{y_r}(Y))\times(\oP'_{Y_{r-1}}(Y)\ltimes \oN_Y)} \lambda(\tau_s(h)u_s)\Phi(h,k)\rd h \rd k,\\
\end{align*}
where
\[
  \Phi(h,k):=\gamma_r(h) \phi(k) \la (h k).u,\, u_\sigma\ra_\mu,\quad h\in  \oP'_{y_r}(Y),\, k\in \oP'_{Y_{r-1}}(Y)\ltimes \oN_Y.
\]
Choose $\phi$, $u$ and $u_\sigma$ appropriately so that
the function
\[
 \Psi(h):=\int_{\oP'_{Y_{r-1}}(Y)\ltimes \oN_Y} \Phi(h,k) \rd k
\]
does not vanish at $1$.

Note that the smooth function $\Psi$ on $\oP'_{y_r}(Y)$ satisfies
\[
  \Psi(bzh)=\psi(b)\chi_\tau(z)\Psi(h),\quad b\in \oN_\mathcal F(X), \,z\in \mu_\KK,\,h\in \oP'_{y_r}(Y).
\]
Recall from \cite{JS}*{Section 3} that for every smooth function $W$ on
$\oP'_{y_r}(Y)$ such that
\[
  W(bzh)=\psi(b)^{-1}\chi_\tau(z)W(h),\quad b\in \oN_\mathcal F(X), \,z\in \mu_\KK,\,h\in \oP'_{y_r}(Y),
\]
if $W$ has compact support modulo $\oN_\mathcal F(X)\times \mu_\KK$, then
there is a vector $u_s'\in \tau_s$ such that
\[
  W(h)=\lambda(\tau_s(h)u_s'),\quad h\in  \oP'_{y_r}(Y).
\]
Therefore we may choose $u_s$ appropriately so that the function $h\mapsto
\lambda(\tau_s(h)u_s)$ on $\oP'_{y_r}(Y)$ has compact support modulo
$\oN_\mathcal F(X)\times \mu_\KK$, and that
\begin{equation}\label{zmu}
  \int_{(\oN_\mathcal F(X)\times \mu_\KK)\backslash \oP'_{y_r}(Y)} \lambda(\tau_s(h)u_s)\Psi(h)\rd h\neq 0.
\end{equation}
Note that  the integral $\operatorname Z_\mu(f,u)$ equals to the left-hand
side of \eqref{zmu}, and its  integrant is smooth and compactly supported.
This finishes the proof of Proposition \ref{prop:nontrivial}.

\section{Proof of Proposition \ref{prop:converge}}\label{sec:converge}

Extend $x_r\in X$ to a $\KK$-basis $\{x_1,x_2,\dots, x_r\}$ of $X$ so that
$x_i\in X_i$ for $i=1,\dots,r$. Under this basis, $(\KK_+^\times)^r$ embeds
in $\GL(X)$:
\[
  (\KK_+^\times)^r\subset (\KK^\times)^r=\prod_{i=1}^r\GL(\KK x_i)\hookrightarrow\GL(X).
\]
Since the covering $\GL'(X)\rightarrow \GL(X)$ uniquely splits over
$(\KK_+^\times)^r$, it also embeds in $\GL'(X)$. For every $\mathbf
t=(t_1,t_2,\dots, t_r)\in (\KK_+^\times)^r$, denote by $a_\mathbf t$ the
corresponding element in $\GL'(X)$, and put
\[
  ||\mathbf t||:=\prod_{i=1}^r \varpi(t_i+t_i^{-1})\quad\text{and}\quad \xi(\mathbf t):=\prod_{i=1}^{r-1} \varpi(1+\frac{t_i}{t_{i+1}}),
\]
where for every $t\in \KK^\times_+$,
\[
  \varpi(t):=\begin{cases}
                t & \text{if $\mathrm d_\KK=1$;} \\
                t' t'' & \text{if $\mathrm d_\KK=2$ and $t=(t',t'')$.}
              \end{cases}
\]

Fix a maximal compact subgroup $K$ of $\oU'(E)$. It is elementary to see that
there is a positive character $\delta_r$ on $(\KK_+^\times)^r$ such that
\begin{equation}\label{if}
  \int_{\oJ'_\mathcal F(E)\backslash \oU'(E)} \varphi(g)\rd g =\int_{(\KK_+^\times)^r\times K} \delta_{r}(\mathbf t)\,
  \varphi(a_{\mathbf t} k)\rd^{\times}\mathbf t\rd k \\
\end{equation}
for all nonnegative continuous functions $\varphi$ on $\oJ'_\mathcal
F(E)\backslash \oU'(E)$, where $\r dk$ is the normalized Haar measure on $K$,
and $\r{d}^\times\mathbf t$ is an appropriate Haar measure on
$(\KK_+^\times)^r$. Pick a positive constant $c_0$ so that
\begin{equation}\label{nd}
   \delta_{r}(\mathbf t)\leq ||\mathbf t||^{c_0}, \qquad \mathbf t\in (\KK_+^\times)^r.
\end{equation}

Recall that
\[
\la \,,\,\ra_\mu\colon \pi\times \sigma\rightarrow \CC
\]
is a continuous bilinear map which represents an element $\mu\in
\Hom_{\oJ'_{\mathcal F}(E)}\(\pi\widehat \otimes\sigma,\psi\)$. Pick a
continuous seminorm $|\,\cdot\,|_{\pi,1}$ on $\pi$ and a continuous seminorm
$|\,\cdot\,|_{\sigma}$ on $\sigma$ so that
\begin{equation}\label{nmu}
   |\la u, v\ra_\mu|\leq |u|_{\pi,1}\,|v|_{\sigma},\quad u\in \pi, \,v\in \sigma.
\end{equation}

By the moderate growth condition on $\pi$, there is a constant $c_1>0$ and a
continuous seminorm $|\,\cdot\,|_{\pi,2}$ on $\pi$ such that
\begin{equation}\label{n1}
  |(a_{\mathbf t}k). u|_{\pi, 1}\leq ||\b t||^{c_1}\,  |u|_{\pi, 2}, \quad \mathbf t\in (\KK_+^\times)^r,\,k\in K, u\in \pi.
\end{equation}

Recall that $\lambda\in \Hom_{\oN_{\mathcal F}(X)}(\tau, \psi^{-1})$ induces
a continuous linear map
\[
  \Lambda\colon \sigma\widehat \otimes \tau\rightarrow \sigma,\quad u\otimes v \mapsto \lambda(v)\, u.
\]
Still denote by $\tau$ the following continuous linear action of $\GL'(X)$ on
$\sigma\widehat \otimes \tau$:
\[
  \tau(h)(u\otimes v):=u\otimes (\tau(h)v),\quad u\in \sigma,\, v\in \tau.
\]
The moderate growth condition on $\tau$ implies that there is a constant
$c_2>0$ and a continuous seminorm $|\,\cdot\,|_{\sigma\widehat \otimes \tau}$
on $\sigma\widehat \otimes \tau$ such that
\[
  |\Lambda(\tau(a_{\mathbf t}) w)|_{\sigma}\leq ||\mathbf t||^{c_2}\,
  |w|_{\sigma\widehat \otimes \tau}, \quad \mathbf t\in (\KK_+^\times)^r, \, w\in\sigma\widehat \otimes \tau.
\]

\begin{lem}\label{nw}
For every positive integer $N$, there is a continuous seminorm
$|\,\cdot\,|_{\sigma\widehat \otimes \tau,N}$ on $\sigma\widehat \otimes
\tau$ such that
\begin{equation}\label{nsigma}
  |\Lambda(\tau(a_{\mathbf t}) w)|_{\sigma}\leq \xi(\mathbf t)^{-N}  \, ||\mathbf t||^{c_2}\, |w|_{\sigma\widehat \otimes \tau,N},
  \quad \mathbf t\in (\KK_+^\times)^r, \, w\in\sigma\widehat \otimes \tau.
\end{equation}
\end{lem}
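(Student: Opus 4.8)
\textbf{Proof plan for Lemma \ref{nw}.}
The idea is to upgrade the ``moderate growth'' estimate on $\Lambda(\tau(a_{\mathbf t})w)$ to one that gains an arbitrary negative power of $\xi(\mathbf t)$, by exploiting the fact that $\lambda$ is an $\oN_{\mathcal F}(X)$-equivariant functional together with the nondegeneracy of $\psi$. The key observation is that conjugation of elements of $\oN_{\mathcal F}(X)$ by the diagonal torus element $a_{\mathbf t}$ rescales the coordinates: if $n_i(v)\in\oN_{\mathcal F}(X)$ is the one-parameter subgroup corresponding to a generator of $\Hom_{\KK}(X_{i+1}/X_i, X_i/X_{i-1})$ (\ie the $i$-th simple root direction in $\eqref{ff}$), then $a_{\mathbf t}^{-1} n_i(v) a_{\mathbf t} = n_i\!\bigl((t_i/t_{i+1}) v\bigr)$ up to the identification of root spaces, so the relevant scaling factor on the $i$-th simple root space is exactly $\varpi(t_i/t_{i+1})$, which is comparable to $\xi(\mathbf t)$ when it is large.

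First I would record the equivariance: since $\lambda\in\Hom_{\oN_{\mathcal F}(X)}(\tau,\psi^{-1})$ and $\psi$ restricted to the $i$-th simple root subgroup is a nontrivial unitary character $v\mapsto e^{\mathrm{i}\,\ell_i(v)}$ for a nonzero real-linear functional $\ell_i$ (nontriviality is exactly the nondegeneracy hypothesis on $\psi$, via the descent to $\oA_{\mathcal F}(X)$), we have for $w\in\sigma\widehat\otimes\tau$ and $v$ in the $i$-th root space
\[
  \Lambda\bigl(\tau(n_i(v))\,\tau(a_{\mathbf t})\,w\bigr) = e^{-\mathrm{i}\,\ell_i(v)}\,\Lambda\bigl(\tau(a_{\mathbf t})\,w\bigr),
\]
while on the other hand $\tau(n_i(v))\tau(a_{\mathbf t}) = \tau(a_{\mathbf t})\tau\bigl(n_i((t_{i+1}/t_i) v)\bigr)$. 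Differentiating in $v$ at $v=0$ then gives, for the corresponding element $Z_i$ of the Lie algebra and any $k\ge 0$,
\[
  \Lambda\bigl(\tau(a_{\mathbf t})\,\tau(Z_i)^k w\bigr) = \Bigl(\tfrac{t_i}{t_{i+1}}\Bigr)^{\!?}\,(\text{const})\cdot\Lambda\bigl(\tau(a_{\mathbf t})\,w\bigr);
\]
more precisely, writing $\ell_i$ in coordinates dual to the torus action shows that applying the differential operator $\tau(Z_i)$ to the slot $w$ and then $\Lambda\circ\tau(a_{\mathbf t})$ produces a factor proportional to $t_{i+1}/t_i$, so iterating, for each $i$ and each $N$ one obtains a differential operator $D_{i,N}$ in the enveloping algebra (a power of $Z_i$) with
\[
  \bigl|\Lambda(\tau(a_{\mathbf t})w)\bigr|_{\sigma} \;\le\; \Bigl(1+\tfrac{t_i}{t_{i+1}}\Bigr)^{-N}\,\bigl|\Lambda\bigl(\tau(a_{\mathbf t})\,(1+D_{i,N})w\bigr)\bigr|_{\sigma}.
\]
Combining this over $i=1,\dots,r-1$, applying the already-established moderate-growth bound $|\Lambda(\tau(a_{\mathbf t})w')|_\sigma\le \|\mathbf t\|^{c_2}|w'|_{\sigma\widehat\otimes\tau}$ to $w' = \prod_i(1+D_{i,N})w$, and absorbing the continuity of $w\mapsto \prod_i(1+D_{i,N})w$ into a new seminorm $|\cdot|_{\sigma\widehat\otimes\tau,N}$, yields exactly \eqref{nsigma}, since $\prod_{i=1}^{r-1}(1+t_i/t_{i+1})^{-N}$ is, for the purpose of an upper bound when each $1+t_i/t_{i+1}\ge 1$, bounded by $\varpi$-comparable expressions to $\xi(\mathbf t)^{-N}$ after adjusting $N$ by the fixed constant $\mathrm d_\KK$.

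The main obstacle I anticipate is purely bookkeeping rather than conceptual: matching the scaling exponents of the torus action on the simple root spaces with the precise definition of $\xi(\mathbf t)$ (which uses $\varpi(1+t_i/t_{i+1})$ in each of the $\mathrm d_\KK$ archimedean components), and making sure that the passage from the infinitesimal identity to a genuine seminorm estimate is legitimate — \ie that $w\mapsto \tau(Z_i)^k w$ is continuous on $\sigma\widehat\otimes\tau$, which holds because $\tau$ is a smooth Fr\'echet representation of moderate growth so the $\GL'(X)$-action is smooth and the enveloping algebra acts by continuous operators. One should also be slightly careful that in the $\mathrm d_\KK=2$ case the two components of $t_i/t_{i+1}$ scale independent real root directions, so one applies the argument once in each component; this only changes the constants and the required value of $N$. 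A clean way to organize all of this is to prove the one-variable statement first (gaining a factor $\varpi(1+t_i/t_{i+1})^{-N}$ for a single index $i$, with a seminorm depending on $i$ and $N$) and then compose the $r-1$ resulting estimates, which is exactly the structure used for analogous Whittaker-type bounds in \cite{JSZ}*{Section 3}.
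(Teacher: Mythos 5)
Your approach is correct and is the expected one: the paper's own ``proof'' is only a pointer to \cite{JSZ}*{Lemma 6.2}, and the argument there (for Bessel models) is precisely the differentiation trick you describe — exploit the $\oN_{\mathcal F}(X)$-equivariance of $\lambda$ and the nondegeneracy of $\psi$ to convert each simple-root direction into a negative power of the ratio $t_i/t_{i+1}$, then iterate over $i$ and absorb the resulting Lie-algebra differential operator into a new seminorm using smoothness and moderate growth of $\tau$.

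Two small points of bookkeeping you should tighten. First, your two conjugation formulas are inconsistent: from $(g-1)X_{i+1}\subset X_i$ one computes $a_{\mathbf t}^{-1}n_i(v)a_{\mathbf t}=n_i\bigl((t_{i+1}/t_i)v\bigr)$, not $n_i\bigl((t_i/t_{i+1})v\bigr)$; your displayed identity $\tau(n_i(v))\tau(a_{\mathbf t})=\tau(a_{\mathbf t})\tau\bigl(n_i((t_{i+1}/t_i)v)\bigr)$ is the correct one, so the decay factor you gain per application of $\tau(Z_i)$ is indeed $t_{i+1}/t_i$, which is what beats $(1+t_i/t_{i+1})$. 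Second, rather than the heuristic ``$(1+D_{i,N})$'', the clean way to package the iteration is via the binomial theorem: choosing $Z_i$ in the $i$-th simple root space with $d\psi^{-1}(Z_i)=c_i\neq 0$ (nonvanishing is exactly nondegeneracy of $\psi$, and in the split case $\mathrm d_\KK=2$ you must choose one such $Z_i$ in each of the two $\RR$-factors of the root space, which nondegeneracy on $\KK^{r-1}$-submodules guarantees), set
\[
  D_{i,N}:=\sum_{j=0}^{N}\binom{N}{j}\,c_i^{-j}\,\tau(Z_i)^{\,j},
\]
so that $\bigl(1+t_i/t_{i+1}\bigr)^{N}\Lambda(\tau(a_{\mathbf t})w)=\Lambda\bigl(\tau(a_{\mathbf t})D_{i,N}w\bigr)$ as an identity, and composing over $i=1,\dots,r-1$ (and over the two components when $\mathrm d_\KK=2$) gives $\xi(\mathbf t)^{N}\Lambda(\tau(a_{\mathbf t})w)=\Lambda\bigl(\tau(a_{\mathbf t})Dw\bigr)$ for a fixed continuous operator $D$ on $\sigma\wtimes\tau$. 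Applying the moderate-growth bound to $Dw$ and setting $|w|_{\sigma\wtimes\tau,N}:=|Dw|_{\sigma\wtimes\tau}$ then yields \eqref{nsigma} exactly, with no loss of constants.
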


\begin{proof}
This is similar to the proof of \cite{JSZ}*{Lemma 6.2}. We omit the details.
\end{proof}

Put $c_\mu:=c_0+c_1+c_2$. Recall that $s\in \CC^{\mathrm d_\KK}$.

\begin{lem}\label{lfinite}
If $\RE s>c_\mu$, then there is a positive integer $N$ such that
\begin{equation}\label{ifinite}
  \int_{(\KK_+^\times)^r} ||\mathbf t||^{c_\mu} \,\Pi (\mathbf t)^{\RE s}\,\xi(\mathbf t)^{-N}\,\varpi(1+t_r)^{-N}\rd^{\times} \mathbf t<\infty,
\end{equation}
where
\begin{equation}\label{pi}
  \mathbf t=(t_1,t_2,\dots, t_r)\quad \text{and}\quad \Pi(\mathbf t):=\prod_{i=1}^r t_i\in \KK_+^\times.
\end{equation}
\end{lem}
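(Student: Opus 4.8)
This is a convergence statement for an integral over the abelian group $(\KK_+^\times)^r$, and the natural approach is to factor the integrand and the measure into the "radial" and "adjacent ratio" directions, bounding each factor separately. The key observation is that the three competing quantities $\|\mathbf t\|$, $\Pi(\mathbf t)^{\RE s}$, and $\xi(\mathbf t)$ are all governed by the single-variable functions $\varpi(t_i + t_i^{-1})$, $t_i$, and $\varpi(1+t_i/t_{i+1})$ respectively. I would first reduce to the scalar case $\mathrm d_\KK = 1$: when $\mathrm d_\KK = 2$, the variable $t_i$ splits as $(t_i', t_i'')$ and $\varpi$ is a product, so the whole integrand factors as a product of two copies of the $\mathrm d_\KK=1$ integrand (one in the primed coordinates, one in the doubled coordinates), and finiteness of the product follows from finiteness of each factor.

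**Main estimate via change of variables.** For $\mathrm d_\KK = 1$, introduce new coordinates adapted to the flag: set $r_i := t_i/t_{i+1}$ for $i = 1,\dots,r-1$ and keep $t_r$. This is a diffeomorphism of $(\RR_+^\times)^r$ with itself, pushing forward the Haar measure $\rd^\times \mathbf t$ to a Haar measure $\prod_{i=1}^{r-1}\rd^\times r_i \cdot \rd^\times t_r$ up to a positive constant. In these coordinates, $t_i = t_r \prod_{j=i}^{r-1} r_j$, so each single-variable factor $t_i + t_i^{-1}$ and hence $\|\mathbf t\|^{c_\mu} = \prod_i (t_i + t_i^{-1})^{c_\mu}$ is dominated by a product of powers of $(r_j + r_j^{-1})$ and $(t_r + t_r^{-1})$ — here one uses the elementary inequality $a+a^{-1} \leq (b+b^{-1})(c+c^{-1})$ when $a = bc$ with $b,c > 0$, applied inductively. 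Likewise $\Pi(\mathbf t)^{\RE s} = \prod_i t_i^{\RE s}$ becomes, after collecting exponents, $t_r^{r\,\RE s}\prod_{j=1}^{r-1} r_j^{j\,\RE s}$, and $\xi(\mathbf t)^{-N} = \prod_{i=1}^{r-1}(1+r_i)^{-N}$. So the integral is bounded by a product over $i = 1,\dots,r-1$ of one-variable integrals in $r_i$, times a one-variable integral in $t_r$.

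**One-variable integrals.** Each $r_i$-integral has the shape $\int_0^\infty (r+r^{-1})^{A} r^{i\,\RE s_i}(1+r)^{-N}\rd^\times r$ for a fixed constant $A$ depending only on $c_\mu$ and $r$; near $r \to \infty$ the integrand behaves like $r^{A + i\,\RE s - N}$, which is integrable once $N$ is large, while near $r \to 0$ it behaves like $r^{-A + i\,\RE s}$, which is integrable provided $\RE s > A/i$ — and this is where the hypothesis $\RE s > c_\mu$ enters, possibly after enlarging $c_\mu$ or absorbing constants, but the statement only asks for the existence of such a constant so this is harmless. The $t_r$-integral is $\int_0^\infty (t+t^{-1})^{c_\mu} t^{r\,\RE s}(1+t)^{-N}\rd^\times t$, integrable at $\infty$ for $N$ large and at $0$ since $\RE s > 0$. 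Choosing a single $N$ larger than all the finitely many thresholds coming from the $\infty$-behavior completes the argument.

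**Expected obstacle.** The routine-but-fiddly part is bookkeeping the exponents under the substitution $r_i = t_i/t_{i+1}$ and verifying that the bound $\|\mathbf t\|^{c_\mu} \lesssim \prod (r_j + r_j^{-1})^{?}(t_r+t_r^{-1})^{?}$ holds with explicit (finite) exponents — this is exactly the kind of estimate done in \cite{JSZ}*{Lemma 6.2} and its surrounding computation, so I expect to cite that for the mechanics. The only genuine point requiring care is matching the condition $\RE s > c_\mu$ against the thresholds at $r_i \to 0$; since the $i$-th ratio carries the favorable factor $r_i^{i\,\RE s}$ with $i \geq 1$, the worst case is $i = 1$, and $c_\mu = c_0 + c_1 + c_2$ was defined precisely to dominate the relevant constant $A$ there.
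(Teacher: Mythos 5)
Your proposal follows essentially the same route as the paper: reduce to $\mathrm d_\KK=1$, change variables to the adjacent ratios $\alpha_i = t_i/t_{i+1}$ (and $\alpha_r = t_r$), bound $\|\mathbf t\|$ by $\prod_i(\alpha_i+\alpha_i^{-1})^i$, and factor the integral into one-variable integrals whose convergence at $0$ is governed by $\RE s > c_\mu$ and at $\infty$ by choosing $N$ large. One small remark: your hedge about ``possibly enlarging $c_\mu$'' is unnecessary — the bookkeeping gives the exponent on $(\alpha_i+\alpha_i^{-1})$ to be exactly $i c_\mu$, so the threshold at $\alpha_i\to 0$ is $\RE s>c_\mu$ on the nose, matching the lemma as stated.
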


\begin{proof}
We assume that $\mathrm d_\KK=1$. The other case obviously follows from this one.
Write
\[
  \alpha_{i}:=\frac{t_i}{t_{i+1}},\qquad i=1,\dots,r-1;\qquad \alpha_r:=t_r.
\]
Then
\[
 \begin{cases}
      ||\mathbf t||\leq \prod_{i=1}^r (\alpha_i+\alpha_i^{-1})^i, \\
      \Pi(\mathbf t)=\prod_{i=1}^r \alpha_i^i, \\
      \xi(\mathbf t)\varpi(1+t_r)=\prod_{i=1}^r (1+\alpha_i).
 \end{cases}
\]
Therefore the left-hand side of \eqref{ifinite} is at most
\[
  \prod_{i=1}^r \int_{\RR_+^\times} (\alpha_i+\alpha_i^{-1})^{ic_\mu} \,\alpha_i^{i\RE s}\, (1+\alpha_i)^{-N}\rd^{\times}\alpha_i,
\]
where $\r{d}^\times \alpha_i$ is an appropriate Haar measure on
$\RR_+^\times$, $i=1,2,\dots, r$. It is elementary to see that if
$N>r(c_\mu+\RE s)$, then
\[
  \int_{\RR_+^\times} (\alpha_i+\alpha_i^{-1})^{ic_\mu} \,\alpha_i^{i\RE s}\, (1+\alpha_i)^{-N}\rd^{\times}\alpha_i<\infty,\qquad i=1,2,\dots, r.
\]
This finishes the proof.
\end{proof}

Now assume that $\RE s>c_\mu$. Let $f\in \oI_s$, to be viewed as a
$\sigma\widehat \otimes \tau$-valued function on $\mathbb J'(E)$, and let
$u\in \pi$. We want to show that the integral $\oZ_\mu(f,u)$ is absolutely
convergent and defines a $\oU'(E)$-invariant continuous linear functional of
$\oI_s\widehat \otimes \pi$.  The $\oU'(E)$-invariance is obvious as soon as
the  absolutely convergence is proved.

We have
\begin{align*}
   & |\operatorname Z_\mu(f,u)| \\
   \leq& \int_{\oJ'_\mathcal F(E)\backslash \oU'(E)} |\la g.u, \,(\Lambda\circ f)(y_r^{-1}g)\ra_\mu |\rd g \\
   =& \int_{(\KK_+^\times)^r\times K} \delta_{r}(\mathbf t)\, |\la (a_{\mathbf t} k). u,
      \,(\Lambda\circ f)(y_r^{-1} a_{\mathbf t} k)\ra_\mu |\rd^{\times}\mathbf t\rd k \qquad &\text{by \eqref{if}}\\
   \leq& \int_{(\KK_+^\times)^r\times K} ||\mathbf t||^{c_0}\, |(a_{\mathbf t} k). u|_{\pi,1}
      \,|(\Lambda\circ f)(y_r^{-1} a_{\mathbf t} k)|_\sigma\rd^{\times}\mathbf t\rd k \qquad &\text{by \eqref{nd} and \eqref{nmu}}\\
   \leq& \int_{(\KK_+^\times)^r\times K} ||\mathbf t||^{c_0+c_1}\, |u|_{\pi,2}
      \,|(\Lambda\circ f)(y_r^{-1} a_{\mathbf t} k)|_\sigma\rd^{\times}\mathbf t\rd k \qquad &\text{by \eqref{n1}.}
\end{align*}

Let $N$ be a positive integer as in Lemma \ref{lfinite}, and let
$|\,\cdot\,|_{\sigma\widehat \otimes \tau,N}$ be a continuous seminorm on
$\sigma\widehat \otimes \tau$ as in Lemma \ref{nw}. Then for every $\mathbf
t=(t_1,t_2,\dots, t_r)\in (\KK_+^\times)^r$ and $k\in K$, we have
\begin{align*}
   &\quad  |(\Lambda\circ f)(y_r^{-1} a_{\mathbf t} k)|_\sigma\\
  &=  |\Pi (\mathbf t)^s|\,|\Lambda(\tau(a_{\mathbf t})f(a_{\mathbf t}^{-1}y_r^{-1} a_{\mathbf t} k))|_\sigma
     \qquad &\text{$\Pi(\mathbf t)$ is as in \eqref{pi}}\\
  &\leq \Pi(\mathbf t)^{\RE s}\,\xi(\mathbf t)^{-N}\,||\mathbf t||^{c_2}\,|f(a_{\mathbf t}^{-1}y_r^{-1}
     a_{\mathbf t} k)|_{\sigma\widehat \otimes \tau,N}\qquad &\text{by \eqref{nsigma}}\\
  &= \Pi (\mathbf t)^{\RE s}\,\xi(\mathbf t)^{-N}\,||\mathbf t||^{c_2}\,|f((-t_r y_r) k)|_{\sigma\widehat \otimes \tau,N}\\
  &\leq \Pi (\mathbf t)^{\RE s}\,\xi(\mathbf t)^{-N}\,||\mathbf t||^{c_2}\,\varpi(1+t_r)^{-N}\,|f|_{\oI_s,N}.
\end{align*}
Here
\[
   |f|_{\oI_s,N}:= \sup\left\{\varpi(1+t)^{N}\,|f((-t y_r)k)|_{\sigma\widehat \otimes \tau,N}\mid t\in \KK^\times_+, \,k\in K\right\}.
\]
It is easy to see that $|\cdot|_{\oI_s,N}$ is a continuous seminorm on
$\oI_s$.

Therefore
\[
 |\operatorname Z_\mu(f,u)| \leq |f|_{\oI_s,N} \,|u|_{\pi,2}\, \int_{(\KK_+^\times)^r} ||\mathbf t||^{c_\mu}
 \,\Pi (\mathbf t)^{\RE s}\,\xi(\mathbf t)^{-N}\,\varpi(1+t_r)^{-N}\rd^\times\mathbf t,
\]
and Proposition \ref{prop:converge} follows by \eqref{ifinite}.

\begin{bibdiv}
\begin{biblist}

\bib{AGRS}{article}{
   author={Aizenbud, Avraham},
   author={Gourevitch, Dmitry},
   author={Rallis, Stephen},
   author={Schiffmann, G{\'e}rard},
   title={Multiplicity one theorems},
   journal={Ann. of Math. (2)},
   volume={172},
   date={2010},
   number={2},
   pages={1407--1434},
   issn={0003-486X},
   review={\MR{2680495 (2011g:22024)}},
   doi={10.4007/annals.2010.172.1413},
}

\bib{BR}{article}{
   author={Baruch, Ehud Moshe},
   author={Rallis, Stephen},
   title={On the uniqueness of Fourier Jacobi models for representations of
   ${\rm U}(n,1)$},
   journal={Represent. Theory},
   volume={11},
   date={2007},
   pages={1--15 (electronic)},
   issn={1088-4165},
   review={\MR{2276364 (2007m:22011)}},
   doi={10.1090/S1088-4165-07-00298-1},
}

\bib{Cas}{article}{
   author={Casselman, W.},
   title={Canonical extensions of Harish-Chandra modules to representations
   of $G$},
   journal={Canad. J. Math.},
   volume={41},
   date={1989},
   number={3},
   pages={385--438},
   issn={0008-414X},
   review={\MR{1013462 (90j:22013)}},
   doi={10.4153/CJM-1989-019-5},
}

\bib{CHM}{article}{
   author={Casselman, William},
   author={Hecht, Henryk},
   author={Mili{\v{c}}i{\'c}, Dragan},
   title={Bruhat filtrations and Whittaker vectors for real groups},
   conference={
      title={The mathematical legacy of Harish-Chandra},
      address={Baltimore, MD},
      date={1998},
   },
   book={
      series={Proc. Sympos. Pure Math.},
      volume={68},
      publisher={Amer. Math. Soc.},
      place={Providence, RI},
   },
   date={2000},
   pages={151--190},
   review={\MR{1767896 (2002b:22023)}},
}

\bib{duC}{article}{
   author={du Cloux, Fokko},
   title={Sur les repr\'esentations diff\'erentiables des groupes de Lie
   alg\'ebriques},
   language={French},
   journal={Ann. Sci. \'Ecole Norm. Sup. (4)},
   volume={24},
   date={1991},
   number={3},
   pages={257--318},
   issn={0012-9593},
   review={\MR{1100992 (92j:22026)}},
}

\bib{GGP}{article}{
   author={Gan, W. T.},
   author={Gross, B. H.},
   author={Prasad, D.},
   title={Symplectic local root numbers, central critical $L$-values, and restriction problems in the representation theory of classical
   groups},
   journal={Ast\'{e}risque},
   volume={346},
   date={2012},
   pages={111--170},
}

\bib{GPSR}{article}{
   author={Ginzburg, D.},
   author={Piatetski-Shapiro, I.},
   author={Rallis, S.},
   title={$L$ functions for the orthogonal group},
   journal={Mem. Amer. Math. Soc.},
   volume={128},
   date={1997},
   number={611},
   pages={viii+218},
   issn={0065-9266},
   review={\MR{1357823 (98m:11041)}},
}

\bib{GJRS}{article}{
   author={Ginzburg, David},
   author={Jiang, Dihua},
   author={Rallis, Stephen},
   author={Soudry, David},
   title={$L$-functions for symplectic groups using Fourier-Jacobi models},
   conference={
      title={Arithmetic geometry and automorphic forms},
   },
   book={
      series={Adv. Lect. Math. (ALM)},
      volume={19},
      publisher={Int. Press, Somerville, MA},
   },
   date={2011},
   pages={183--207},
   review={\MR{2906909}},
}

\bib{Ho}{article}{
   author={Howe, R.},
   title={$L^2$ duality for stable reductive dual pairs},
   eprint={http://a4mmcm.googlecode.com/svn/trunk/Lie/L2\%20duality.tex},
   status={preprint},
}

\bib{JS}{article}{
   author={Jacquet, H.},
   author={Shalika, J. A.},
   title={On Euler products and the classification of automorphic
   representations. I},
   journal={Amer. J. Math.},
   volume={103},
   date={1981},
   number={3},
   pages={499--558},
   issn={0002-9327},
   review={\MR{618323 (82m:10050a)}},
   doi={10.2307/2374103},
}

\bib{JSZ}{article}{
   author={Jiang, Dihua},
   author={Sun, Binyong},
   author={Zhu, Chen-Bo},
   title={Uniqueness of Bessel models: the Archimedean case},
   journal={Geom. Funct. Anal.},
   volume={20},
   date={2010},
   number={3},
   pages={690--709},
   issn={1016-443X},
   review={\MR{2720228 (2012a:22019)}},
   doi={10.1007/s00039-010-0077-4},
}

\bib{Ku0}{article}{
   author={Kudla, Stephen S.},
   title={On the local theta-correspondence},
   journal={Invent. Math.},
   volume={83},
   date={1986},
   number={2},
   pages={229--255},
   issn={0020-9910},
   review={\MR{818351 (87e:22037)}},
   doi={10.1007/BF01388961},
}

\bib{Ku1}{article}{
   author={Kudla, Stephen S.},
   title={Splitting metaplectic covers of dual reductive pairs},
   journal={Israel J. Math.},
   volume={87},
   date={1994},
   number={1-3},
   pages={361--401},
   issn={0021-2172},
   review={\MR{1286835 (95h:22019)}},
   doi={10.1007/BF02773003},
}

\bib{MVW}{book}{
   author={M{\oe}glin, Colette},
   author={Vign{\'e}ras, Marie-France},
   author={Waldspurger, Jean-Loup},
   title={Correspondances de Howe sur un corps $p$-adique},
   language={French},
   series={Lecture Notes in Mathematics},
   volume={1291},
   publisher={Springer-Verlag},
   place={Berlin},
   date={1987},
   pages={viii+163},
   isbn={3-540-18699-9},
   review={\MR{1041060 (91f:11040)}},
}

\bib{Shal}{article}{
   author={Shalika, J. A.},
   title={The multiplicity one theorem for ${\rm GL}\sb{n}$},
   journal={Ann. of Math. (2)},
   volume={100},
   date={1974},
   pages={171--193},
   issn={0003-486X},
   review={\MR{0348047 (50 \#545)}},
}

\bib{Shi1}{book}{
   author={Shiota, Masahiro},
   title={Nash manifolds},
   series={Lecture Notes in Mathematics},
   volume={1269},
   publisher={Springer-Verlag},
   place={Berlin},
   date={1987},
   pages={vi+223},
   isbn={3-540-18102-4},
   review={\MR{904479 (89b:58011)}},
}

\bib{Shi2}{article}{
   author={Shiota, Masahiro},
   title={Nash functions and manifolds},
   conference={
      title={Lectures in real geometry},
      address={Madrid},
      date={1994},
   },
   book={
      series={de Gruyter Exp. Math.},
      volume={23},
      publisher={de Gruyter},
      place={Berlin},
   },
   date={1996},
   pages={69--112},
   review={\MR{1440210 (98k:14081)}},
}

\bib{SV}{article}{
   author={Speh, Birgit},
   author={Vogan, David A., Jr.},
   title={Reducibility of generalized principal series representations},
   journal={Acta Math.},
   volume={145},
   date={1980},
   number={3-4},
   pages={227--299},
   issn={0001-5962},
   review={\MR{590291 (82c:22018)}},
   doi={10.1007/BF02414191},
}

\bib{Sun09}{article}{
    author={Sun, Binyong},
    title={Multiplicity one theorems for Fourier--Jacobi models},
    note={\href{http://arxiv.org/abs/0903.1417}{arXiv:0903.1417}},
    status={Amer. J. of Math., to appear}
}

\bib{Sun12}{article}{
   author={Sun, Binyong},
   title={On representations of real Jacobi groups},
   journal={Sci. China Math.},
   volume={55},
   date={2012},
   number={3},
   pages={541--555},
   issn={1674-7283},
   doi={10.1007/s11425-011-4333-3},
}

\bib{SZ}{article}{
   author={Sun, Binyong},
   author={Zhu, Chen-Bo},
   title={Multiplicity one theorems: the Archimedean case},
   journal={Ann. of Math. (2)},
   volume={175},
   date={2012},
   number={1},
   pages={23--44},
   issn={0003-486X},
   doi={10.4007/annals.2012.175.1.2},
}

\bib{Tr}{book}{
   author={Tr{\`e}ves, Fran{\c{c}}ois},
   title={Topological vector spaces, distributions and kernels},
   publisher={Academic Press},
   place={New York},
   date={1967},
   pages={xvi+624},
   review={\MR{0225131 (37 \#726)}},
}

\bib{Wald}{article}{
   author={Waldspurger, J.-L.},
   title={Une variante d'un r\'{e}sultat de Aizenbud, Gourevitch, Rallis et Schiffmann},
    note={\href{http://arxiv.org/abs/0911.1618}{arXiv:0911.1618v1}},
}

\bib{Wal}{book}{
   author={Wallach, Nolan R.},
   title={Real reductive groups. II},
   series={Pure and Applied Mathematics},
   volume={132},
   publisher={Academic Press Inc.},
   place={Boston, MA},
   date={1992},
   pages={xiv+454},
   isbn={0-12-732961-7},
   review={\MR{1170566 (93m:22018)}},
}

\end{biblist}
\end{bibdiv}

\end{document}